 \newtheorem{thm}{Theorem}[section]
 \newtheorem{cor}[thm]{Corollary}
 \newtheorem{lem}[thm]{Lemma}
 \newtheorem{prop}[thm]{Proposition}
 \theoremstyle{definition}
 \theoremstyle{remark}
 \newtheorem{rem}[thm]{Remark}
 \newtheorem*{ex}{Example}
 \numberwithin{equation}{section}
 \theoremstyle{assumption}
 \newtheorem{assumption}[thm]{Assumption}
 \newcommand{\eps}{\varepsilon}
 \newcommand{\norm}[1]{\Vert#1\Vert}
 \newcommand{\abs}[1]{\left\vert#1\right\vert}
 \newcommand{\set}[1]{\left\{#1\right\}}
 \newcommand{\inner}[1]{\left(#1\right)}
\begin{document}

%
%
%
%
%
%
%
%
%

\title[Compactness of  the resolvent for the Witten Laplacian]
 {Compactness of  the resolvent for the Witten Laplacian}

\author[W.-X. Li]{Wei-Xi Li}

\address{%
School of Mathematics and Statistics,  and Computational Science Hubei Key Laboratory, Wuhan University,
Wuhan 430072, China }

\email{wei-xi.li@whu.edu.cn}

\thanks{The work was supported by NSF of China( No. 11422106) and Fok Ying Tung Education Foundation (No. 151001).}
\subjclass{Primary 81Q10; Secondary 47A10}

\keywords{Compactness  criterion, resolvent,  Witten Laplacian}

\date{}

\begin{abstract}
In this paper  we consider  the  Witten Laplacian on 0-forms and give sufficient conditions  under which the  Witten Laplacian admits a compact resolvent.  These conditions  are imposed on the potential itself,   involving the control of high order derivatives by lower ones, as well as the control of the positive eigenvalues of the Hessian matrix.  This compactness  criterion for  resolvent  is  inspired by the one  for  the Fokker-Planck operator.  Our method relies on   the   nilpotent group techniques  developed by Helffer-Nourrigat [Hypoellipticit\'e maximale pour des
 op\'erateurs polyn\^omes de champs de vecteurs, 1985].
\end{abstract}

\maketitle
\section{Introduction and main results}
The Witten Laplacians on forms were initially introduced by E. Witten \cite{Witten} on a compact manifold,  where he considered  a  new complex  associated with the  distorted exterior
\begin{eqnarray*}
	d_{V}=e^{-V}\circ d \circ e^{V}.
\end{eqnarray*}
Then the Witten Laplacians on forms are  defined by 
\begin{eqnarray*}
	\Delta_{V}^{(\cdot)}= d_{V}\circ d_{V}^*+ d_{V}^*\circ d_{V}.
\end{eqnarray*}
 In this paper  we will consider only    the   Witten Laplacians on 0-forms  in the real space  $\mathbb R^n$, and in this case it reads  
\begin{eqnarray*}
   \Delta_{V}^{(0)}=
  -\Delta_x+\abs{\partial_xV(x)}^2-\Delta_x
  V(x),
 \end{eqnarray*}
 and  has the form of   a    Schr\"odinger operator $-\Delta+\tilde V$ with $\tilde V=\abs{\partial_xV(x)}^2-\Delta_x
  V(x)$.  If  replacing  $V$ and $d$ respectively  by $V/h$ and $hd$ in the distorted exterior  we then get  the   semi-classical Witten Laplacian  $$\triangle_{ V,h}^{(0)} =-h^2\triangle_x + \abs{\partial_x V}^2-h \triangle_x   V.$$   
  It is of interest in itself to analyze   the spectrum of   the semi-classical Witten Laplacian as  the parameter  $h\rightarrow 0,$  cf. \cite{He02,HeNi06, HeNi04, Le, LeNiVe, Ma, Mi}  and the references listed therein.  If we introduce another parameter by 
  \begin{eqnarray*}
  	\tau=h^{-1},
  \end{eqnarray*} 
  then the semi-classical Witten Laplacian can be rewritten as
  \begin{eqnarray*}
  h^{-2}	\triangle_{ V,h}^{(0)} =-\triangle_x +\tau^{2} \abs{\partial_x V}^2-\tau \triangle_x   V=\triangle_{ \tau V}^{(0)}.
  \end{eqnarray*}
  The latter operator  is also closely related to the  microhypoellipticity  problem for   the  system  of complex vector fields   
  \begin{equation}\label{syseqcomplex}
  P_j=\partial_{x_j}- i  \inner{\partial_{x_j} V (x)}\partial_{t},\quad
  j=1,\cdots,n,\quad i=\sqrt{-1},
\end{equation}
where limit $\tau\rightarrow+\infty$ has to be considered.

Our main goal of this paper is to explore  the  criterion by  which the Witten Laplacian   has a compact resolvent and thus admits purely discrete spectrum.  This issue is closely linked with the exponential trend to the equilibrium for the  spatially inhomogeneous kinetic systems, such as the   non-selfadjoint   Fokker-Planck and Boltzmann equations, cf.\cite{DeVi, Herau15,Herau07,Herau06,HerauNier04}.
 Similar problems occur in the theory of the $\bar\partial$-Neumann problem, and we refer to \cite{ChrFu,FuSt,FuSt1,HaHe} and the surveys  given in \cite{Ha,St}, which reveal  that there is a close relationship between the  Witten Laplacians  and the weighted $\Box_b$-operator of the  $\bar\partial$-complex.

 By one of the elementary results on Schr\"odinger operators we see  the Witten Laplacian is with a compact resolvent  if 
 \begin{eqnarray*}
 \abs{\partial_xV(x)}^2- \Delta_x V\rightarrow+\infty,   ~~{\rm as} ~~\abs x\rightarrow+\infty.
\end{eqnarray*} 
More generally (see \cite{He02,HelfferNier05} for instance),  it is still true if
 \begin{eqnarray*} 
	t\abs{\partial_xV(x)}^2-\Delta_x V\rightarrow+\infty,   ~~{\rm as} ~~\abs x\rightarrow+\infty
\end{eqnarray*}
for some $t\in]0,2[.$  The subject  of compact resolvent for Witten Laplacian has already been explored extensively  by  Helffer-Nier \cite{HelfferNier05}   based on the idea of nilpotent Lie groups.    This  idea  was initiated by Rothschild-Stein \cite{RS} when studying the hypoellipticity property of the H\"ormander's operators   and    Rothschild-Stein lifting theorem says that one can obtain the sharp local regularity  by lifting  the vector fields  to nilpotent Lie groups and then using  the  analysis for the corresponding left invariant operators defined on the groups.   
This kind of nilpotent Lie techniques were 
     developed further  by Nourrigat \cite{Nourrigat2,Nourrigat1,Nourrigat3} and  Helffer-Nourrigat \cite{Hel-Nou}  for systems of pseudo-differential operators, where the pseudo-differential operators are approximated by operators defined in Euclidean space with polynomial coefficients and      the problem is then reduced to the analysis of the operators with polynomial coefficients.  When applying the nilpotent techniques to study the maximal estimate for the  specific Witten Laplacian,  the property can be deduced from the analysis of the ``limiting polynomials'' (see \cite{HelfferNier05}  or  Subsection \ref{sub-hn} below for the precise definition), and based on this idea  Helffer-Nier  \cite{HelfferNier05}  have obtained  the compact criteria for the resolvent of Witten Laplacian with  specific  potentials  such as  polynomials,  the  homogeneous and polyhomogeneous  functions and the analytic functions.     We  also  refer to the work of Helffer-Mohamed \cite{HM} for the first application of  the hypoellipticity  techniques to the compactness problems in mathematical physics.      In this work we will give a new criterion,   involving the similar conditions related to  the local minimum problem. We remark that these conditions are  imposed on the potential $V$ itself rather than on the  ``limiting polynomials'',  which is somehow easy to check and apply to concrete examples.  The criterion is inspirited by the one for Fokker-Planck operator \cite{lipreprint} and the Helffer-Nier's conjecture.  The Helffer-Nier's conjecture says the Fokker-Planck operator 
   has a compact resolvent if and
only if the Witten Laplacian $\Delta_{V/2}^{(0)}$ 
 has a compact  resolvent, which shows  the close
 link between the compact resolvent property for the Fokker-Planck operator and the same property for the corresponding Witten Laplacian. 
 The necessity part, that is the Witten
Laplacian $\Delta_{V/2}^{(0)}$ has a compact resolvent  if the
Fokker-Planck operator $P$ is with a compact resolvent,  has already proven by Helffer and Nier (c.f. \cite[Theorem 1.1]{HelfferNier05}). The reverse implication   still remains open up to now, except  for some special   potentials (cf. \cite{HelfferNier05,HerauNier04,lipisa} ).  Recently  the author \cite{lipreprint}  obtained a  compactness criteria  for  the resolvent of Fokker-Planck operator,  involving  the control of the positive eigenvalues of  the Hessian matrix of the  potential, 
 and the main assumption on $V$ there is that
     \begin{eqnarray}\label{assext}
    \forall~x\in\mathbb R^n,\quad	\sum_{j\in I_{x}} \lambda_j(x)\leq  C  \inner{1+\abs{\partial_x
  V(x)}^2}^{2/3},
    \end{eqnarray} 
    with 	  $\lambda_\ell, 1\leq \ell\leq n$,  the eigenvalues of the Hessian  matrix   
  $
 \inner{\partial_{x_ix_j}V}_{1\leq i, j\leq n} 
$ and  
    \begin{eqnarray}\label{indexset}
  	I_{ x}= \Big\{1\leq \ell \leq n;~\lambda_\ell(x)>0\Big\}. 
  \end{eqnarray}
Under the assumption \eqref{assext}  the author  \cite{lipreprint} proved  the Fokker-Planck operator admits  a compact resolvent, provided for some   $\alpha\geq 0,$ 
  \begin{equation}\label{co2}
	\lim_{\abs x\rightarrow +\infty} \abs{\partial_x V(x)}=+\infty, \quad \textrm{or}~~ \lim_{\abs x\rightarrow +\infty} \Big(\alpha\abs{\partial_x V(x)}^2-\Delta_x V(x)\Big)=+\infty.
\end{equation}
In view of the   necessity part of the Helffer-Nier's Conjecture  \cite[Theorem 1.1]{HelfferNier05}, we see Witten Laplacian   has also a compact resolvent under the same assumptions \eqref{assext} and \eqref{co2}.  As far as the Witten Laplacian is only concerned, we can go further by improving the conditions  \eqref{assext} and \eqref{co2}.    
Precisely, the  main assumption on $V$  can be stated as follows.  

  Throughout the paper we will let $k\geq 2$ be a given integer, and define $\tilde f$ by setting  \begin{equation}\label{defoffv}
\tilde f(x)\stackrel{\rm def}{=} \sum_{1\leq \abs\alpha \leq k} \abs{\partial^\alpha  V(x)}^{1/\abs\alpha}.	
\end{equation}

\begin{assumption}\label{assu}
Letting $I_x$ and $\tilde f$ are  given respectively   in \eqref{indexset} and \eqref{defoffv}, we  suppose the assumptions listed subsequently are fulfilled. 
\begin{enumerate}[(i)]
	\item A constant $C>1$ exists such that for any $x\in\mathbb R^n$ we have 
  \begin{equation}\label{maiass}
	 	\sum_{j\in I_{x}} \lambda_j(x)\leq  C  \Big(\mathcal M(x)+ \abs{\partial_x
  V(x)}^2+\sum_{2\leq \abs\alpha \leq k}\abs{\partial^\alpha  V(x)}^{\inner{2-\delta_1}/\abs\alpha} +1\Big),
\end{equation}
where $0<\delta_1<1$ is an arbitrarily small   number, and $\mathcal M$ is defined by 
\begin{eqnarray*}
	\mathcal M (x)=\sum_{j\not\in I_x}\big(-\lambda_j(x)\big).
\end{eqnarray*}
 Recall  $\lambda_\ell$  are  the eigenvalues of the Hessian  matrix   
  $
 \inner{\partial_{x_ix_j}V}_{1\leq i, j\leq n}.
$
\item There exists an arbitrarily small number $0<\delta_2<1,$   such that  \begin{equation}\label{conupp}
 \forall~\abs\alpha= k+1,~\forall~x\in\mathbb R^n,\quad  \abs{\partial^\alpha V(x)}	\leq C_\alpha \Big(1+\tilde f(x)\Big)^{k+1-\delta_2},
 \end{equation}
 where  $C_\alpha$ are constants depending only on $\alpha$. 
 \item We have  $\tilde f(x)  \rightarrow +\infty$ as $\abs x\rightarrow+\infty.$
\end{enumerate}
  \end{assumption}

 The main results can be stated as follows.

\begin{thm}\label{thmmain}
	Under Assumption \ref{assu},      we can find two positive constant $C,\tau_0>0,$   such that
		\begin{eqnarray}\label{cesc}
		\forall~\tau\geq\tau_0,~~\forall~u\in C_0^\infty(\mathbb R^n),\quad  \norm{ \tilde f_\tau u}_{L^2}^2
     \leq C   \inner{\triangle_{\tau V}^{(0)} u, u}_{L^2} + C\norm{u}_{L^2}^2,
	\end{eqnarray}  
	where $\tilde f_\tau$ is defined by
	\begin{eqnarray*}
		\tilde f_\tau(x)=\sum_{1\leq \abs\alpha \leq k} \tau^{1/\alpha}\abs{ \partial^\alpha  V(x)}^{1/\abs\alpha}.
	\end{eqnarray*} 
	Moreover for any $\tau$ with $0<\tau<\tau_0$ we can find a constant $C_\tau,$ depending on $\tau$ and the constant $C$ in \eqref{cesc}, such that
	\begin{equation}\label{lastestimate}
		~\forall~u\in C_0^\infty(\mathbb R^n),\quad  \norm{ \tilde f_\tau u}_{L^2}^2
     \leq C_\tau   \inner{\triangle_{\tau V}^{(0)} u, u}_{L^2} + C\norm{u}_{L^2}^2.
	\end{equation}
As a result the Witten Laplacian $\triangle_{\tau V}^{(0)}$  has a  compact resolvent for any $\tau>0.$ 
\end{thm}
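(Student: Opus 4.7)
The overall approach is to establish the maximal estimate \eqref{cesc} via the Helffer--Nourrigat nilpotent group techniques, and then to deduce compactness of the resolvent from \eqref{cesc} combined with condition (iii) of Assumption \ref{assu}. The starting point is the well-known factorization
$$\inner{\triangle_{\tau V}^{(0)} u, u}_{L^2} = \sum_{j=1}^n \norm{(\partial_{x_j} + \tau \partial_{x_j} V)u}_{L^2}^2,$$
which represents $\triangle_{\tau V}^{(0)}$ as $\sum_j Z_j^*Z_j$ with $Z_j=\partial_{x_j}+\tau\partial_{x_j}V$, and translates the sought estimate into a weighted maximal estimate for these vector fields. Through the standard correspondence $u(x)\mapsto u(x)e^{i\tau t}$, the question is equivalent to a maximal hypoellipticity problem, at frequency $\tau$ in the $t$-variable, for the system of complex vector fields $P_j$ in \eqref{syseqcomplex}, which is precisely the setting where the Helffer--Nourrigat machinery applies.

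To implement it, I would introduce a partition of unity $1=\sum_\nu\chi_\nu^2$ whose cells are centered at points $x_\nu$ and have diameter comparable to $\rho(x_\nu)$, with $\rho=\inner{1+\tilde f_\tau}^{-1}$, and rescale each cell to unit size. On the rescaled cell I would Taylor expand $\tau V$ at $x_\nu$ up to order $k+1$: assumption (ii), and especially the gain $\delta_2$ in \eqref{conupp}, ensures that the Taylor remainder is $O(\rho(x_\nu)^{\delta_2})$ after rescaling and can thus be absorbed into the principal part. One is thereby reduced to a family of model Witten Laplacians $\triangle_{V_\nu}^{(0)}$, where each $V_\nu$ is a polynomial of degree at most $k+1$ lying in a precompact family determined by the constants in Assumption \ref{assu}.

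The central step is then a uniform maximal estimate for this model operator, which I would obtain by invoking the Helffer--Nourrigat criterion recalled in Subsection \ref{sub-hn}: the estimate holds as soon as every nontrivial ``limiting polynomial'' extracted from the family $\set{V_\nu}$ produces, in every irreducible unitary representation of the nilpotent Lie algebra generated by $\set{P_j}$, an operator with no approximate null state. Condition (i), namely \eqref{maiass}, is the new ingredient replacing the (poly)homogeneous or analytic hypotheses of Helffer--Nier; it bounds the strength of the positive part of the Hessian of every limiting polynomial in terms of its gradient, its negative Hessian part and its higher derivatives, and a standard Gaussian quasi-mode / Agmon-type computation shows that a violation of the model maximal estimate would force precisely the reverse inequality. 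The presence of $\mathcal M$ on the right-hand side of \eqref{maiass} is natural because the negative Hessian eigenvalues of $V$ contribute positively, through the term $-\tau\triangle_xV$, to the Schr\"odinger potential $\tau^2\abs{\partial_xV}^2-\tau\triangle_xV$.

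Once the model estimate is in hand, summing over $\nu$ and commuting $\chi_\nu$ past $\triangle_{\tau V}^{(0)}$ produces errors of the form $\rho(x_\nu)^{-2}\norm{\chi_\nu u}_{L^2}^2\leq C\tilde f_\tau(x_\nu)^2\norm{\chi_\nu u}_{L^2}^2$, which are absorbable by the left-hand side of \eqref{cesc} provided $\tau\geq\tau_0$ is large enough; this yields \eqref{cesc}. The variant \eqref{lastestimate} for $0<\tau<\tau_0$ follows from \eqref{cesc} at $\tau=\tau_0$ together with the pointwise comparison of $\tilde f_\tau$ with $\tilde f_{\tau_0}$. Finally, combining \eqref{lastestimate}, condition (iii), and the elementary bound $\sum_j\norm{\partial_{x_j}u}_{L^2}^2\leq C\inner{\triangle_{\tau V}^{(0)}u,u}_{L^2}+C\norm{u}_{L^2}^2$ (again a consequence of the factorization), one sees that the form domain of $\triangle_{\tau V}^{(0)}$ embeds compactly into $L^2(\mathbb R^n)$, and compactness of the resolvent follows.

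The main technical difficulty I foresee is the uniform model estimate under the relatively soft pointwise hypothesis \eqref{maiass}: one must rule out, among the possible limiting polynomials in the Helffer--Nourrigat sense, those that would admit approximate null states, and verify that the delicate balance between positive Hessian eigenvalues and the other terms encoded in \eqref{maiass} translates into injectivity on every relevant representation. The control of the Taylor remainder after rescaling, which bridges $V$ and its polynomial models, is a secondary but still nontrivial point whose success relies crucially on the gain $\delta_2$ in \eqref{conupp}.
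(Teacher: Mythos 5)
Your high-level skeleton (IMS localization adapted to a slowly varying metric, rescaling to polynomial models of degree $\leq k$, Helffer--Nourrigat criterion, reabsorption of $\eps$-errors for $\tau$ large) matches the paper's strategy, and the use of assumption (ii) with the gain $\delta_2$ to control Taylor remainders after rescaling is exactly right. But the central step --- verifying that the Helffer--Nourrigat criterion actually holds for the limiting polynomials produced by $V$ under hypothesis (i) --- is missing. You gesture at ``a standard Gaussian quasi-mode / Agmon-type computation'' to rule out approximate null states in irreducible representations, and you yourself flag this as ``the main technical difficulty.'' The paper closes this gap with a much more concrete and elementary device: Lemma \ref{lemcr} shows (a) that the Hessian inequality \eqref{lambdap} passes from $V$ to every limiting polynomial $q\in\mathcal L_{p,0}$ (via the scaling relations \eqref{+gra++}--\eqref{+max++}), and (b) that any polynomial satisfying \eqref{eq+1} cannot have a local minimum unless it is constant, by constructing a variable-coefficient elliptic operator $\sum a_{ij}\partial_i\partial_j$ (diagonalizing the Hessian and weighting the negative eigendirections by $\sqrt{\tilde C}$) whose action on $q$ is bounded by $\tilde C|\nabla q|^2$, and then invoking the maximum principle. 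This converts the operator-theoretic representation criterion into the ``no local minimum'' form of Theorem \ref{prohel}, which is what makes the pointwise Hessian hypothesis on $V$ usable. Without this lemma, your proposal has no mechanism to actually turn \eqref{maiass} into injectivity of the model operators.

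Two further points you gloss over. First, the local Helffer--Nourrigat estimate has a constant depending on the limiting polynomial, so obtaining a constant uniform over all cells is not automatic from ``precompactness''; the paper handles this by an argument by contradiction that selects a bad cell, proves the centers escape to infinity (using assumption (iii)), rescales, extracts a limiting polynomial $p$ and a weak limit $\zeta$ of the normalized test functions, and derives a contradiction with the Baker--Campbell--Hausdorff inequality $\tau^{2/k}\norm{\zeta}^2\lesssim\norm{(\partial_x+\tau\nabla p)\zeta}^2$. Second, \eqref{lastestimate} does not follow merely from the pointwise bound $\tilde f_\tau\leq\tilde f_{\tau_0}$: you also need to dominate the quadratic form of $\triangle_{\tau_0 V}^{(0)}$ by that of $\triangle_{\tau V}^{(0)}$, which fails termwise because of the sign-indefinite $-\Delta V$ contribution; the paper uses an operator inequality $\triangle_{\tau_0 V}^{(0)}\leq m\,\triangle_{\tau V}^{(0)}+(1-(m\tau/\tau_0)^2)\tilde f_{\tau_0}^2$ with a carefully chosen $m\geq1$, absorbing the extra term using \eqref{cesc} at $\tau_0$ itself.

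\end{document}
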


\begin{rem}
\begin{enumerate}[\noindent(i)]
	\item   In view of  Helffer-Nier's conjecture we may expect that the Fokker-Planck operator is also with a compact resolvent under   Assumption \ref{assu} above.   
\item We need only verify  the estimate   \eqref{maiass} for these points  where $\Delta V$ is positive,
	since  it obviously holds for the points where $\Delta V\leq 0.$
\item  As an immediate consequence of  \eqref{cesc} we have the   maximal microhypoellipticity in the  direction $\tau>0$  (see \cite{HelfferNier05}) for the system \eqref{syseqcomplex}, that is,  a constant $C$ exists such that for any $\tau>0$ and for any $u\in C_0^\infty(\Omega)$  we have   	\begin{eqnarray*}
		  \norm{  \partial_{x} u}_{L^2}^2+\tau^2\norm{  \inner{\partial_{x} V} u}_{L^2}^2 
     \leq C  \norm{P_{\tau} u}_{L^2 }^2 + C\norm{u}_{L^2}^2,
	\end{eqnarray*}  
where $\Omega$ is a neighborhood of the point  $x_0$ such that $\partial_x V(x_0)=0,$ and    	 $$P_{\tau}=\partial_{x}+ \inner{\partial_{x} V}  \tau.$$ 
Note that 
$\norm{P_{\tau} u}_{L^2 }^2=\big(\triangle_{ \tau V}^{(0)} u, u\big)_{L^2},$ and the constant $C$ here may depend on $\Omega.$ 
 \end{enumerate}
\end{rem}

 We end   the introduction with two examples which 
  has already been explored in \cite{HelfferNier05}  by virtue of the "limiting polynomials" (see Subsection \ref{sub-hn} below for the precise definition). 
 
 \begin{ex}[Example 10.4.3.1 of \cite{HelfferNier05} ]
 	Consider the potentials  $$ V_\delta(x)= x_1^2x_2^2+\delta(x_1^2+x_2^2),$$ with $\delta\neq 0$ given real number.
 \end{ex}

 Here we apply Theorem \ref{thmmain}.  It  is clear the statements (ii)-(iii) with $k=4$ in Assumption \ref{assu} are fulfilled for the above $V_\delta.$  It remains to check the estimate   \eqref{maiass} in the first statement (i).  Direct computation gives $$\abs{\nabla V_\delta(x)}^2=4x_1^2\inner{x_2^2+\delta}^2+4x_2^2\inner{x_1^2+\delta}^2$$ and  $$\sum_{j\in I_x}\lambda_j(x)+\sum_{j\notin I_x}\lambda_j(x)=\Delta V_\delta(x)=2  x_1^2+2  x_2^2+4\delta.$$

 If $\delta>0$, then  for any $x\in\mathbb R^n,$
\begin{eqnarray*}
	\sum_{j\in I_x}\lambda_j(x)+\sum_{j\notin I_x}\lambda_j(x) \leq 2\delta^{-2} \abs{\partial_x V_\delta(x)}^2+4\delta.
\end{eqnarray*}
Then the property in \eqref{maiass} is satisfied   for   $\delta>0.
$

Now suppose $\delta<0.$ For each $(x_1.x_2)\in\mathbb R^2$ we have three cases.
\begin{enumerate}[(a)]
	\item  If $\abs{x_1^2+\delta}\leq -\delta/2$ and $\abs{x_2^2+\delta}\leq -\delta/2$ then $$\sum_{j\in I_x}\lambda_j(x)+\sum_{j\notin I_x}\lambda_j(x)\leq -2\delta,$$ and thus \eqref{maiass} holds. 
	\item If $\abs{x_1^2+\delta}\geq -\delta/2$ and $\abs{x_2^2+\delta}\geq -\delta/2$ then
	\begin{eqnarray*}
		\abs{\nabla V_\delta(x)}^2\geq \delta^2\abs x^2\geq  \frac{\delta^2}{2} \Big(\sum_{j\in I_x}\lambda_j(x)+\sum_{j\notin I_x}\lambda_j(x)-4\delta\Big),
	\end{eqnarray*}
	which yields \eqref{maiass}.
	\item One of the terms $\abs{x_j^2+\delta}, j=1,2,$ is bigger than $-\delta/2$ and the another one is smaller than $-\delta/2$. We may suppose without loss of generality that $\abs{x_1^2+\delta}\leq -\delta/2$ and $\abs{x_2^2+\delta}\geq -\delta/2.$ Then  we have
	\begin{equation}\label{twoca}
		-{\delta \over 2}\leq x_1^2\leq  -\frac{3\delta}{2} ~\textrm{and}~x_2^2\geq -\frac{3\delta}{2}, ~\textrm{or}~-\frac{\delta}{2}\leq x_1^2\leq  -\frac{3\delta}{2}~\textrm{and}~x_2^2\leq -\frac{\delta}{2}.
	\end{equation}
Note that $\Delta V_\delta$ is bounded from above by a constant  for the latter case in \eqref{twoca} and thus \eqref{maiass}  holds; meanwhile for the former case we have, observing $\abs{x_2^2+\delta}=x_2^2+\delta\geq -\delta/2,$
\begin{eqnarray*}
	\abs{\nabla V_\delta(x)}^2\geq (-2\delta)(x_2^2+\delta)^2\geq \delta^2(x_2^2+\delta)  
 \end{eqnarray*}
and
\begin{eqnarray*}
	\sum_{j\in I_x}\lambda_j(x)+\sum_{j\notin I_x}\lambda_j(x)\leq 2x_2^2+\delta,
\end{eqnarray*}
which also yields \eqref{maiass}.
\end{enumerate} 
We then conclude that  $\triangle_{ V_\delta}^{(0)}$ has a compact resolvent 
whenever $\delta\neq 0.
$  Note that  $\triangle_{ V_\delta}^{(0)}$ doesn't have    compact resolvent  when $\delta=0.$

\begin{ex}[Example 10.4.3.2 of \cite{HelfferNier05} ]
Consider the potential $\Phi_\delta$ defined by
\begin{eqnarray*}
	\Phi_{\delta}=(x_1^2-x_2)^2+\delta x_2^2.
\end{eqnarray*}
\end{ex}

By Proposition 10.21 of \cite{HelfferNier05} we see $\Delta_{\Phi_\delta}^{(0)}$ has a compact resolvent if (and only if) $\delta\neq 0.$   If using   
Theorem \ref{thmmain} instead we can conclude  that it is  true for $\delta\in\mathbb R\setminus \big\{0, -1\big\},$  and  so our results can't apply to this example when $\delta=-1.$     
To see this we need only verify the condition \eqref{maiass}.     Direct calculation gives 
\begin{eqnarray*}
	\abs{\partial_x \Phi_\delta} \sim 2\abs{x_1^2-x_2}\cdot \abs{x_1}+\abs{x_1^2-(1+\delta)x_2},
\end{eqnarray*}
and the Hessian matrix $H_{\Phi_\delta}$ of  $\Phi_\delta$ reads
\begin{eqnarray*}\label{diag}
	H_{\Phi_\delta}= \begin{pmatrix}
		&12x_1^2-4x_2  &-4x_1\\
		&-4x_1 &  2(1+\delta)x_2 \\
	\end{pmatrix} ,
\end{eqnarray*}
Write $ \mathbb R^2=  A_1\cup  A_2$ with
\begin{eqnarray*}
	    A_1= \Big\{ x=(x_1,x_2);\  \abs{x_1}\geq 1 \Big\}, \quad  A_2= \Big\{  x=(x_1,x_2); \ \abs{x_1}\leq 1 \Big\}.
\end{eqnarray*}
  
  \medskip
\noindent \underline{\it Consider the case of $x\in  A_1.$} Then
\begin{eqnarray*}
	\abs{\partial_x \Phi_\delta} \sim 2\abs{x_1^2-x_2}\cdot \abs{x_1}+\abs{x_1^2-(1+\delta)x_2}\geq  \abs{x_1^2-x_2} + \abs\delta\cdot \abs{x_2}
\end{eqnarray*}
  Then the modulus of each entry in the matrix $H_{\Phi_\delta}$  is bounded from above by  $\abs{\partial_x \Phi_\delta}+1,$ provided $\delta\neq 0.$ Thus  the condition (1.6) holds. 
  
  \medskip
\noindent \underline{\it Consider the case of $x\in  A_2.$} Then the modulus of each entry in $H_{\Phi_\delta}$  is  bounded by  $\abs{x_2}+1.$ Moreover observe 
\begin{eqnarray*}
	\abs{\partial_x \Phi_\delta} \sim 2\abs{x_1^2-x_2}\cdot \abs{x_1}+\abs{x_1^2-(1+\delta)x_2}\geq   \abs{1+\delta}  \cdot \abs{x_2}-1.
\end{eqnarray*}
 Thus the condition (1.6) holds in this case provided $\delta \neq -1.$
As a result,  it follows from Theorem \ref{thmmain} that  $\triangle_{ \Phi_\delta}^{(0)}$ has a compact resolvent 
whenever $\delta\neq 0, -1.
$   

Finally we remark that  $\Phi_\delta$ violates \eqref{maiass} for $\delta=-1$ at the points $(0,x_2)$ with $x_2\rightarrow -\infty.$   Nonetheless,  $\triangle_{ \Phi_{-1}}^{(0)}$ has indeed  compact resolvent (see \cite [Proposition 10.21]{HelfferNier05}).     
It remains interesting to improve the condition \eqref{maiass} as sharp as possible,  such that it  can be applied to the potential   
$	\Phi_{-1}=x_1^4-2x_1^2x_2.
$

\section{Proof of the main result}
The proof  is strongly inspired by  the  Helffer and Nourrigat's  recursion argument  related to Kirillov's theory, cf.\cite{HelfferNier05, Nier-BJ} for the induction arguments for Witten Laplacian and \cite{Hel-Nou,Nourrigat2,Nourrigat1} for more general problems.  Here we will follow the argument in the Nier's lectures \cite{Nier-BJ}   and  proceed through  the   subsections as below.

\subsection{Helffer and Nourrigat's  Criteria for maximal estimates}\label{sub-hn}

In this part we recall the criteria for the maximal hypoellipticity developed by  Helffer and Nourrigat  \cite{Hel-Nou} and its application to Witten Laplacian(see  \cite{HelfferNier05}).

 Denote by $E_r$ the set of polynomials with degree less than or equal to $r.$  A subset $\mathcal L$ of $E_r$  is called canonical if it has the following properties : 
\begin{enumerate}[\quad (i)]
	\item If $p\in\mathcal L$ and $y\in\mathbb R^n,$  then the polynomial defined by 
	\begin{eqnarray*}
		 q(x)=p(x+y)-p(y)
	\end{eqnarray*}
	also lies in $\mathcal L.$
	\item  If    $p\in\mathcal L$ and $\delta>0,$  then polynomial
	$
		 q(x)=p(\delta x)
	$
	also lies in $\mathcal L.$
	\item $\mathcal L$ is a closed subset of $E_r$.  
\end{enumerate}

Given $p\in E_r$, we denote by $\mathcal L_{p,0}$  the  canonical set which contains  all  the polynomials $q$ of degree less than or equal to $r$  vanishing at $0$ such that    there exists  a sequence   $y_j\in\mathbb R^n$ with $y_j\rightarrow 0$  and   sequences  $\tau_j$ and $h_j$  of positive  numbers  with $\tau_j\rightarrow+\infty $ and $h_j\rightarrow 0,$  such that
   $$\forall ~1\leq \abs\alpha\leq r,\quad \partial^\alpha q(0)=\lim_{j\rightarrow \infty}  \tau_j h_j^{\abs{\alpha}}\partial_x^{\alpha} p(y_j).$$
   
   \begin{rem}\label{reklimit}
   If	 $q\in \mathcal L_{p,0}$ then there exists  a sequence   $y_j\in\mathbb R^n$ with $y_j\rightarrow 0$  and   sequences  $\tau_j$ and $h_j$  of positive  numbers  with $\tau_j\rightarrow+\infty $ and $h_j\rightarrow 0,$  such that
   \begin{eqnarray*}
   	q(x)=\lim_{j\rightarrow+\infty} \tau_j\big[p(y_j+h_j x)-p(y_j)\big].
   \end{eqnarray*}
   \end{rem}

Applying the results of Helffer-Nourrigat \cite{HelfferNier05} to the system \eqref{syseqcomplex} gives the following 
\begin{thm}[Helffer and Nourrigat \cite{HelfferNier05}]\label{prohel}
	Given $p\in E_r.$    Assume that   any nonzero  $q\in\mathcal  L_{p,0}\cap E_{r-1} $  has no local minimum in $\mathbb R^n.$   Then  there exists  a  constant $C>0$ and  a neighborhood $\Omega$ of   $0,$   such that the following estimate 
	\begin{eqnarray*}
   \norm{\partial_{x}u}^2_{L^2}
    +\tau^2\norm{  \inner{\partial_{x}p} u}^2_{L^2}\leq
   C \inner{\Delta_{\tau p}^{(0)}u,~u}_{L^2}+C\norm{u}_{L^2}^2
	\end{eqnarray*}
	holds for all $\tau>0$ and for all $u\in  C_0^\infty\inner{\Omega}.$    
	\end{thm}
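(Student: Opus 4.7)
The plan is to specialize the general maximal-hypoellipticity criterion of Helffer--Nourrigat \cite{Hel-Nou} to the system of complex vector fields $P_j=\partial_{x_j}-i(\partial_{x_j}p)\partial_t$ from \eqref{syseqcomplex}. A short computation gives $\Delta_{\tau p}^{(0)}=\sum_{j}(P_j^{(\tau)})^* P_j^{(\tau)}$ with $P_j^{(\tau)}=\partial_{x_j}+\tau(\partial_{x_j}p)$, so by a partial Fourier transform in $t$ (equivalently by testing against $u(x,t)=v(x)e^{i\tau t}$) the bound claimed in the theorem is equivalent to the maximal estimate
\[
\|\partial_x u\|_{L^2}^2+\|(\partial_x p)\partial_t u\|_{L^2}^2\leq C\sum_{j=1}^n\|P_j u\|_{L^2}^2+C\|u\|_{L^2}^2
\]
for $u\in C_0^\infty$ supported near the origin in $\mathbb{R}^{n+1}$. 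The nilpotent Lie group analysis of \cite{Hel-Nou}, based on Rothschild--Stein lifting and Kirillov's orbit method, reduces such a maximal estimate at the base point $0$ to a model problem: translating by a sequence $y_k\to 0$ and rescaling the vector fields with $h_k\to 0$, $\tau_k\to\infty$, the conjugated system converges (after the natural normalization) to $\tilde P_j=\partial_{x_j}+\partial_{x_j}q$, whose associated $0$-form Witten Laplacian is precisely $\Delta_q^{(0)}$; and by the very definition recalled above (together with Remark \ref{reklimit}), the polynomials $q$ arising as such limits are exactly the elements of $\mathcal{L}_{p,0}$. The Helffer--Nourrigat criterion then says the maximal estimate holds iff for each such nonzero $q$ the operator $\Delta_q^{(0)}$ has trivial $L^2(\mathbb{R}^n)$-kernel.

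It remains to convert the kernel-triviality into the ``no local minimum'' hypothesis and to run a blow-up argument by contradiction. Since $\Delta_q^{(0)}=d_q^* d_q$, any $v\in L^2$ with $\Delta_q^{(0)}v=0$ must satisfy $d_q v=0$, so $v=c\,e^{-q}$; hence $\Delta_q^{(0)}$ has a nontrivial $L^2$-kernel iff $e^{-q}\in L^2(\mathbb{R}^n)$, which for a polynomial forces $q$ to be bounded below and thus to attain a global (and a fortiori local) minimum. Suppose now the estimate of the theorem fails for every choice of $C$ and every neighborhood of $0$. One then produces sequences $u_k\in C_0^\infty$ with shrinking supports and $\tau_k>0$ for which the left-hand side is normalized to $1$ while the right-hand side tends to $0$. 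Choosing concentration points $y_k\to 0$ and scales $h_k\to 0$ adaptively, the rescaled sequence $v_k(x)=h_k^{n/2}u_k(y_k+h_k x)$ stays bounded away from $0$ in $L^2$, the rescaled potentials $\tau_k[p(y_k+h_k\cdot)-p(y_k)]$ converge to some nonzero $q\in\mathcal{L}_{p,0}\cap E_{r-1}$, and the limit $v\in L^2\setminus\{0\}$ of (a subsequence of) $v_k$ satisfies $\Delta_q^{(0)}v=0$. This forces a local minimum of $q$, contradicting the hypothesis on $\mathcal{L}_{p,0}\cap E_{r-1}$.

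The principal obstacle, and the genuinely delicate part of Helffer--Nourrigat's original analysis, is the adaptive blow-up step itself: the parameters $(y_k,h_k,\tau_k)$ must be selected so that (i) the rescaled operators admit a well-defined limit, (ii) the limit polynomial $q$ is nonzero and of degree at most $r-1$ (the top-order part $|\alpha|=r$ of the Taylor expansion of $p$ is absorbed by the coercive quadratic principal symbol, which is what accounts for the appearance of $E_{r-1}$ rather than $E_r$ in the hypothesis), and (iii) the rescaled functions $v_k$ neither spread to infinity nor concentrate to zero. Once the nilpotent lift of $(P_j)$ is constructed and the identification of the limit set with the unitary dual via Kirillov's orbits is in place, this extraction is the core of the argument; the remaining passage to the limit and the kernel analysis of $\Delta_q^{(0)}$ are then routine compactness and elliptic regularity.
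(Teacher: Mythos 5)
The paper does not prove Theorem \ref{prohel}: it is quoted from Helffer--Nier \cite{HelfferNier05}, which in turn rests on the nilpotent Lie group machinery of Helffer--Nourrigat \cite{Hel-Nou}; so there is no in-paper argument to compare against, and what you have written is a reconstruction of the cited proof. Your outline is broadly plausible, but it contains a genuine logical error in the kernel-analysis step. You assert that $e^{-q}\in L^2(\mathbb R^n)$ with $q$ a polynomial ``forces $q$ to be bounded below and thus to attain a global (and a fortiori local) minimum.'' The implication ``bounded below $\Rightarrow$ attains a global minimum'' is false for polynomials, even ones vanishing at $0$: take $q(x_1,x_2)=x_1^2+x_1^2x_2^2-2x_1x_2=(x_1x_2-1)^2+x_1^2-1$, which vanishes at the origin, is bounded below by $-1$, has infimum $-1$ never attained, and whose only critical point $(0,0)$ is a saddle (Hessian determinant $-4$); it has no local minimum at all. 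Your chain of implications is therefore broken; the correct route --- that $e^{-q}\in L^2$ forces $q\to+\infty$ uniformly at infinity and hence a global minimum on a compact set --- needs a separate semi-algebraic (tube) argument, not merely boundedness from below. (For the example above one can check $e^{-q}\notin L^2$, so the final equivalence is not refuted; only your proof of it is.)

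Beyond this error, the proposal treats as routine the two steps that are the actual substance of the Helffer--Nourrigat theory: translating their representation-theoretic injectivity criterion (formulated over irreducible unitary representations of the free nilpotent group of step $r$, with the set of limit representations controlled via Kirillov's orbit method) into the condition ``$\ker_{L^2}\Delta_q^{(0)}=\{0\}$ for every nonzero $q\in\mathcal L_{p,0}$''; and explaining why only $\mathcal L_{p,0}\cap E_{r-1}$ matters --- a structural fact about the orbit stratification that is not captured by the phrase ``the top-order part is absorbed by the coercive quadratic principal symbol.'' Those are precisely the places where a reader would have to consult \cite{Hel-Nou} or \cite{HelfferNier05}, and where a self-contained proof would have to do real work.
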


\subsection{Stability}
In this part we will show that a stronger form of the estimate \eqref{maiass} is stable for the canonical set introduced above. 
  
We first introduce some notations to be used throughout the paper.  Given a function $\rho\in C^2(\mathbb R^n)$, we denote by $\lambda_{\rho,\ell},1\leq \ell\leq n$ the eigenvalues of the Hessian matrix $\inner{\partial_{x_i}\partial_{x_j}\rho}_{1\leq i, j\leq n}.$  And define $I_{x,\rho}$ and $\mathcal M_\rho(x)$ by setting  
\begin{eqnarray}\label{Ixprho}
I_{x,\rho}=\big\{1\leq \ell\leq n;~\lambda_{\rho,\ell}(x)>0\big\}	
\end{eqnarray}
and
\begin{eqnarray}\label{mrho}
\mathcal M_\rho(x)= \sum_{j\not\in I_{x,\rho}}\big(-\lambda_{\rho,j}(x)\big) =\sum_{j\not\in I_{x,\rho}}\abs{\lambda_{\rho,j}(x)}=\sum_{j=1}^n\max\big\{-\lambda_{\rho,j}(x),0\big\}.
\end{eqnarray}  
We denote  by $B_\sigma$ the ball centered at $0$ with radius $\sigma, $ i.e.,
\begin{eqnarray} \label{bal}
	B_\sigma=\big\{x\in\mathbb R^n;\quad \abs x<\sigma\big\}.
\end{eqnarray}

The main result of this subsection can be stated as follows. 

 \begin{lem}\label{lemcr}
 Let $p\in E_r$  satisfy that  
 \begin{equation}\label{lambdap}
	\forall~x\in B_{\sigma},\quad	\sum_{j\in I_{x,p}} \lambda_{p,j}(x)\leq  C  \inner{\mathcal M_p(x)+ \abs{\partial_x
  p(x)}^2}
\end{equation}
for some $\sigma>0$ and for some constant $C>0,$ where we  use the notations given in \eqref{Ixprho}-\eqref{bal}. 
  Then  there exists a constant $\tilde C\geq 1,$   depending only on the constant $C$ above and the dimension $n,$ such that for any $q\in \mathcal L_{p,0}$ we have 
\begin{equation}\label{eq+1}
	\forall~x\in\mathbb R^n,\quad	\sum_{j\in I_{x,q}} \lambda_{q,j}(x)\leq   \tilde C  \inner{\mathcal M_q(x)+ \abs{\partial_x
  q(x)}^2}.
\end{equation}
 As a result any $q\in\mathcal L_{p,0}\setminus\set{0}$ can not  have any local minimum in  $\mathbb R^n.$ 
 \end{lem}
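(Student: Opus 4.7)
My plan is to first propagate a (stronger form of the) hypothesized inequality (1.7) from $p$ on the ball $B_\sigma$ to every $q \in \mathcal L_{p,0}$ on all of $\mathbb R^n$, and then to exploit that pointwise estimate together with the canonical-set structure of $\mathcal L_{p,0}$ to rule out the existence of local minima for nonzero elements.

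For the propagation step, write $q(x)=\lim_{j\to\infty}\tau_j\bigl[p(y_j+h_jx)-p(y_j)\bigr]$ coming from the definition of $\mathcal L_{p,0}$ with $y_j\to 0$, $\tau_j\to+\infty$, $h_j\to 0^+$. Set $p_j(x):=\tau_j\bigl[p(y_j+h_jx)-p(y_j)\bigr]$; the chain rule gives $Hp_j(x)=\tau_j h_j^2\,Hp(y_j+h_jx)$ and $|\nabla p_j|^2(x)=\tau_j^2 h_j^2|\nabla p|^2(y_j+h_jx)$, so the eigenvalues of $Hp_j$ at $x$ are $\tau_j h_j^2$ times those of $Hp$ at $y_j+h_jx$, $I_{x,p_j}=I_{y_j+h_jx,p}$, and $\mathcal M_{p_j}(x)=\tau_j h_j^2 \mathcal M_p(y_j+h_jx)$. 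For $j$ large $y_j+h_jx\in B_\sigma$; applying (1.7) at that point and multiplying by $\tau_j h_j^2$ yields
\begin{equation*}
  \sum_{i\in I_{x,p_j}}\lambda_{p_j,i}(x)\leq C\mathcal M_{p_j}(x)+\frac{C}{\tau_j}|\nabla p_j|^2(x).
\end{equation*}
Passing $j\to\infty$, the factor $1/\tau_j$ kills the gradient term, and continuity of the sum-of-positive-eigenvalues of a Hessian together with polynomial convergence $p_j\to q$ gives the stronger estimate $\sum_{i\in I_{x,q}}\lambda_{q,i}(x)\leq C\mathcal M_q(x)$ for every $x\in\mathbb R^n$, which in particular implies (1.8).

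For the no-local-minimum part I argue by contradiction. If $q\in\mathcal L_{p,0}\setminus\{0\}$ has a local minimum at $x_0$, then by property (i) of canonical sets $\tilde q(y):=q(y+x_0)-q(x_0)\in\mathcal L_{p,0}$, vanishes at $0$ and has a local minimum there, so $\nabla\tilde q(0)=0$ and $\mathcal M_{\tilde q}(0)=0$. The strong estimate at $0$ then forces the sum of positive eigenvalues to vanish, whence $H\tilde q(0)=0$; the lowest nonvanishing Taylor order of $\tilde q$ at $0$ is therefore some $m\geq 3$, with homogeneous part $P_m$. Using the sequences $(y_j,\tau_j\delta^{-m},h_j\delta)$ one verifies directly that $\delta^{-m}\tilde q(\delta x)\in\mathcal L_{p,0}$ for every $\delta>0$; since this polynomial converges coefficient-wise to $P_m$ as $\delta\to 0^+$, closedness of $\mathcal L_{p,0}$ gives $P_m\in\mathcal L_{p,0}$. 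Moreover $P_m\geq 0$ on $\mathbb R^n$ (from $\tilde q\geq 0$ near $0$ together with homogeneity), $P_m\not\equiv 0$, and since a nonnegative homogeneous polynomial of odd degree must vanish identically, $m$ is even with $m\geq 4$.

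It remains to contradict the existence of such a $P_m$. Let $e_*\in S^{n-1}$ minimize $P_m$ on the unit sphere with value $m_*:=P_m(e_*)\geq 0$. In the \emph{positive-definite case} $m_*>0$, homogeneity gives $P_m(x)\geq m_*|x|^m$ with equality along the ray through $e_*$, so $e_*$ minimizes $F:=P_m-m_*|\cdot|^m\geq 0$ and therefore $HF(e_*)\succeq 0$; since $H(|x|^m)|_{e_*}=mI+m(m-2)e_*e_*^{\top}$ is positive definite, so is $HP_m(e_*)$, producing the contradiction $0<\sum_{i\in I_{e_*,P_m}}\lambda_{P_m,i}(e_*)\leq C\mathcal M_{P_m}(e_*)=0$. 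In the remaining case $m_*=0$, the estimate at the global minimum $e_*$ forces $HP_m(e_*)=0$, and translating by $e_*$ followed by the same leading-order extraction yields $Q_{m'}\in\mathcal L_{p,0}$ with the same properties as $P_m$ but strictly smaller even degree $m'\geq 4$ (strict drop because the degree-$m$ Taylor piece of $P_m(\,\cdot\,+e_*)$ in $y$ equals $P_m(y)$, so absence of lower-order pieces would force translation-invariance and hence $P_m\equiv 0$). Iterating, the degree must eventually land in the positive-definite case, giving the contradiction. The step I expect to be the main obstacle is precisely this positive-definite subcase: the pointwise inequality alone only bounds $\sum_+$ by $\sum_-$ at a given point, and it is the global homogeneity comparison $P_m\geq m_*|x|^m$ that upgrades the Hessian bound at $e_*$ from a degenerate semidefinite estimate to strict positive-definiteness.
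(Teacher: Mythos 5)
Your propagation step (the bound $\sum_{i\in I_{x,q}}\lambda_{q,i}(x)\le C\,\mathcal M_q(x)$, obtained by noting $\tau_j h_j^2|\nabla p|^2=\tau_j^{-1}|\nabla p_j|^2\to 0$) is correct and is essentially the paper's own computation; you are right that it actually yields a bound without the gradient term (the paper states the limit loosely as $C(\mathcal M_q+|\partial_x q|^2)$, but what comes out is $C\,\mathcal M_q$, which of course implies \eqref{eq+1}). For the absence of local minima, however, your route diverges entirely from the paper's: the paper constructs, from the orthogonal diagonalization of $H_q(x)$ and the constant $\tilde C$, a pointwise positive-definite matrix $(a_{ij}(x))$ with $\sum a_{ij}\partial_{x_i}\partial_{x_j}q\le \tilde C|\partial_x q|^2$, i.e.\ $q$ is a supersolution of a second-order elliptic operator with no zeroth-order term, and then invokes the strong maximum principle. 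That argument needs only the weak form \eqref{eq+1}, requires no case analysis, and avoids any induction.

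Your blow-up/descent argument has a genuine gap in the $m_*=0$ subcase. The parenthetical claim that absence of lower-order pieces forces ``translation-invariance and hence $P_m\equiv 0$'' is false: a nonzero homogeneous polynomial can be invariant under translation in a single direction, e.g.\ $P_m(x_1,x_2)=x_2^m$ satisfies $P_m(y+e_1)=P_m(y)$. So the strict degree drop $m'<m$ you need for the iteration is not secured, and the recursion as written does not terminate. (The positive-definite subcase is fine, and it does crucially rely on the strong form $\sum_+\le C\mathcal M_q$ you established, since $\nabla P_m(e_*)=m_*m\,e_*\ne 0$ there so the weak form \eqref{eq+1} would not give a contradiction.) To repair the $m_*=0$ case you would need to observe that when $m'=m$ the polynomial $P_m$ is invariant along $e_*$, hence descends to $\tilde P_m$ on $e_*^\perp$; because $H P_m$ is then block-diagonal with a zero block in the $e_*$-direction, $\tilde P_m$ inherits the strong estimate on $e_*^\perp$ with the same constant, and one can recurse on the dimension of the ambient space (not on the degree), terminating either in the positive-definite case on the orthogonal complement of the maximal invariance subspace or in $P_m\equiv 0$. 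Without some such additional ingredient, the step as written does not close. The paper's maximum-principle proof sidesteps all of this.
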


 \begin{proof} We begin with the proof of the first property  \eqref{eq+1}.    For any $q\in \mathcal L_{p,0}$, by Remark \ref{reklimit} we can find   a sequence   $y_j\in\mathbb R^n$  with $y_j\rightarrow 0$ and sequences $\tau_j$ and $h_j$ of positive  numbers  with $\tau_j\rightarrow+\infty$ and $h_j\rightarrow 0,$ such that
   $$  q(x)=\lim_{j\rightarrow+\infty} \tau_j\big[p(y_j+h_jx)-p(y_j)\big]= \sum_{1\leq\abs\beta\leq r} \inner{\lim_{j\rightarrow+\infty} \tau_j h_j^{\abs{\beta}}\partial^{\beta} p(y_j)}\frac{x^\beta}{\beta!}.$$
   This implies
   \begin{eqnarray}\label{alq}
   \forall~\abs{\alpha}\geq 1,\quad	\partial_x^\alpha q(x)=\sum_{1\leq\abs\beta\leq r} \inner{\lim_{j\rightarrow+\infty}\tau_j h_j^{\abs{\beta}}\partial^{\beta} p(y_j)}\frac{\partial_x^\alpha \inner{x^\beta}}{\beta!}.
   \end{eqnarray}
On the other hand, using the Taylor expansion 
   \begin{eqnarray*}
  \tau_j\big[p(y_j+h_jx)-p(y_j)\big]&=&\sum_{1\leq\abs{\beta}\leq r}\frac{\tau_j  h_j^{\abs{\beta}}\partial^{\beta} p(y_j)}{\beta !}x^{\beta},
   	\end{eqnarray*}
   	we have, for any $\abs\alpha\geq 1,$ 
   	\begin{eqnarray*}
   \tau_j	h_j^{\abs{\alpha}}\inner{\partial^\alpha p} (y_j+ h_j x)& =&\tau_j\partial_x^\alpha\Big( p(y_j+ h_j x)-  p(y_j)\Big)\\
   &=&\sum_{1\leq\abs{\beta}\leq r}\frac{  \tau_jh_j^{\abs{\beta}}\partial^{\beta} p(y_j)}{\beta !} \partial_x^\alpha\inner{x^{\beta}},   	\end{eqnarray*}
   	which along with \eqref{alq} yields
   	\begin{eqnarray*}
   	\forall~\abs\alpha\geq 1,\quad 	\lim_{j\rightarrow+\infty}	\tau_jh_j^{\abs{\alpha}}\inner{\partial^\alpha p} (y_j+ h_j x)=\partial^\alpha q(x).
   	\end{eqnarray*}
   In particular,  
   \begin{eqnarray}\label{+gra++}
   	\lim_{j\rightarrow+\infty}	\tau_jh_j \abs{\inner{\partial_x p} (y_j+ h_j x)}=\abs{\partial_x q(x)}
   \end{eqnarray}
   and
   \begin{equation}\label{+eige}
 \forall~ 1\leq\ell\leq n,	\quad \lim_{j\rightarrow+\infty}\tau_j	h_j^{2}  \lambda_{p,\ell} (y_j+ h_j x)=\lambda_{q,\ell}(x).
  	 \end{equation}
  	 Moreover observing $\mathcal M_q(x)=\sum_{1\leq \ell\leq n} \max\set{-\lambda_{q,\ell}(x),0}$  we have 
  	 \begin{eqnarray}\label{+max++}
   	\lim_{j\rightarrow+\infty}\tau_j	h_j^2  \mathcal M_p (y_j+ h_j x) = \mathcal M_q(x)
   \end{eqnarray}
   because of \eqref{+eige}.    
 For any  $\ell\in I_{x,q}$ with  $x\in\mathbb R^n$ given,  we see $\lambda_{q,\ell}(x)>0.$ Then  using \eqref{+eige} gives
  	 \begin{eqnarray*}
    \lambda_{p,\ell} (y_j+ h_j x)>0	
  	 \end{eqnarray*}
 for all $j$ large enough, since $\tau_j$ and $h_j$ are positive. Furthermore note  $y_j\rightarrow 0,h_j\rightarrow 0,$  and thus  for any $x\in\mathbb R^n$ we have  $y_j+h_jx\in B_\sigma $ for all $j$  large enough.   Consequently it follows from \eqref{lambdap} that,  for  all $j$ large enough, 
 \begin{eqnarray*}
  	   \lambda_{p,\ell} (y_j+ h_j x)\leq   C  \inner{ \mathcal M_p(y_j+ h_j x)+  \abs{\partial_x
  p(y_j+ h_j x)}^2}.
 \end{eqnarray*}
Combining the above estimate and \eqref{+gra++}-\eqref{+max++} we obtain 
 \begin{eqnarray*}
 	\lambda_{q,\ell}(x) &=& \lim_{j\rightarrow+\infty}\tau_j h_j^{2}  \lambda_{p,\ell} (y_j+ h_j x)\\
 	&\leq & C \lim_{j\rightarrow+\infty} \inner{\tau_j h_j^{2}\mathcal M_p(y_j+ h_j x)+ \tau_j h_j^{2}\abs{\partial_x
  p(y_j+ h_j x)}^2}
  \\&=& C  \inner{ \mathcal M_q( x)+  \abs{\partial_x
  q( x)}^2},
   \end{eqnarray*}
which holds for any  $\ell\in I_{x,q}.$  This gives the first statement \eqref{eq+1}  as desired.

 Next we prove the second statement. Let $q\in \mathcal L_{p,0}$ satisfy \eqref{eq+1}.   For the symmetric Hessian matrix $\inner{\partial_{x_ix_j} q(x)}_{1\leq i, j\leq  n}$, we can find  a $n\times n$   orthogonal matrix $Q(x)=\big(q_{ij}(x)\big)_{1\leq i,j\leq n}$ such that  
\begin{eqnarray}\label{diag}
	Q^T\begin{pmatrix}
		\lambda_{q,1}\\
		&\lambda_{q,2}\\
		&&\ddots\\
		&&&\lambda_{q,n}
	\end{pmatrix} Q=\inner{\partial_{x_ix_j}q}_{1\leq i,j\leq n},
\end{eqnarray}
 Define  $a_{ij}, b_{ij}, c_{ij}, 1\leq i,j\leq n,$ as follows. $b_{ij}=0$ if $i\neq j,$ and
\begin{eqnarray}\label{defb}
  b_{jj}(x)=\left\{
  \begin{array}{ll}
  \sqrt{\tilde C}, & \quad {\rm if}~~ \lambda_{q,j}(x)\leq 0,\\
  1, &\quad{\rm if}~~ \lambda_{q,j}(x)>0,
  \end{array}
  \right.
\end{eqnarray}
with $\tilde C\geq 1$  the  constant  in \eqref{eq+1}. And  \begin{eqnarray}\label{defofa}
  a_{ij}=\sum_{1\leq k\leq n}\inner{b_{kk}q_{ki}}\inner{b_{kk}q_{kj}}.
\end{eqnarray}
Then we can verify that, for any $\eta=\inner{\eta_1,\cdots,\eta_n}\in\mathbb R^n,$
\begin{eqnarray*}
	\sum_{1\leq i,j\leq n}a_{ij} \eta_i \eta_j &=&  \sum_{1\leq k\leq n} b_{kk}b_{kk}   \Big(\sum_{1\leq i\leq n} q_{ki}\eta_i \Big) \Big(\sum_{1\leq j\leq n} q_{kj}\eta_j \Big)   \\
	&\geq & \sum_{1\leq k\leq n}     \Big(\sum_{1\leq i\leq n} q_{ki}\eta_i \Big) \Big(\sum_{1\leq j\leq n} q_{kj}\eta_j \Big)\\
	&\geq & \sum_{1\leq i, j\leq n}     \Big(\sum_{1\leq k\leq n} q_{ki}     q_{kj}\Big)\eta_i\eta_j \\
	&=&    \abs\eta^2,
\end{eqnarray*}
the last line using the fact that $Q(x)$ is an
orthogonal. 
Thus $\inner{a_{ij}}_{n\times n}$ is a positive-definite  matrix.   Similarly  we use the relations \eqref{diag} and \eqref{defofa}  to compute, letting $\delta_{k\ell}$ be the the Kronecker delta function, 
\begin{eqnarray*}
&&	\sum_{1\leq i, j\leq n}a_{ij}(x)\partial_{x_i}\partial_{x_j} q(x)\\
&=&\sum_{1\leq i, j\leq n}\Big(\sum_{k=1}^n\inner{b_{kk}(x)q_{ki}(x)}\inner{b_{kk}(x)q_{kj}(x)}\Big)\Big(\sum_{  \ell=1}^nq_{\ell i}(x)q_{\ell j}(x)\lambda_{q,\ell}(x)\Big) \\
	&=&\sum_{1\leq k, \ell\leq n} b_{kk}(x) ^2\lambda_{q,\ell}(x)\Big(\sum_{  i=1}^nq_{ki}(x)q_{\ell i}(x)\Big) \Big(\sum_{j=1}^n q_{k j}(x)q_{\ell j}(x)\Big) \\
	&=&\sum_{1\leq k, \ell\leq n} b_{kk}(x) ^2\lambda_{q,\ell}(x)\delta_{k\ell}=\sum_{1\leq k\leq n} b_{kk}(x) ^2\lambda_{q,k}(x)\\
&=&	\sum_{ i\in I_{x,q}} \lambda_{q,i} (x)+\tilde C\sum_{ i\not\in I_{x,q}} \lambda_{q,i} (x),  
	\end{eqnarray*}
the last equality following from \eqref{defb}.   Now suppose $q$ satisfies \eqref{eq+1}.
 Then it follows from the above equalities that for any $x\in\mathbb R^n$ we have, observing $\mathcal M_q=-\sum_{ i\not\in I_{x,q}} \lambda_{q,i},$
	\begin{eqnarray*}
	\sum_{1\leq i, j\leq n}a_{ij}(x)\partial_{x_i}\partial_{x_j} q(x)  
	\leq \tilde C\big(\mathcal M_q(x)+\abs{\partial_x q(x)}^2\big)-\tilde C\mathcal M_q(x)= \tilde C\abs{\partial_x q(x)}^2.
\end{eqnarray*}
As a result, by maximum principle for elliptic equations we conclude that  $q$ can not  have any local minimum in  $\mathbb R^n$   unless  it is a constant. 	 Observe $q$ vanishes at $0.$ Thus any $q\in\mathcal L_{p,0}\setminus\set{0}$ can not  have any local minimum in  $\mathbb R^n.$  
 The proof of Lemma \ref{lemcr}  is thus complete.
 \end{proof}

As an immediate consequence of Theorem \ref{prohel} and  Lemma \ref{lemcr}, we have
 
 \begin{cor}\label{corcr}
 	Let $ p\in E_r.$  Suppose that there are two constants $C, \sigma>0$   such that   
\begin{equation*}
	\forall~x\in B_{\sigma},\quad	\sum_{j\in I_{x,p}} \lambda_{p,j}(x)\leq  C  \inner{\mathcal M_p(x)+ \abs{\partial_x
  p(x)}^2},
\end{equation*}
where we use the notations given in \eqref{Ixprho}- \eqref{bal}.  Then we can find a    constants  $c_0>0$ depending only on $p$   such that, decreasing $\sigma$ if necessary, 
\begin{eqnarray*}
  \norm{\partial_{x}u}^2_{L^2}
    +\tau^2 \norm{ \inner{\partial_{x}p} u}^2_{L^2}\leq
    c_0 \norm{\big(\partial_{x}+ \tau\inner{\partial_{x} p}  \big)u}_{L^2 }^2+c_0\norm{u}_{L^2}^2
	\end{eqnarray*}
	holds for any $u\in C_0^\infty(B_\sigma)$ and for any $\tau>0.$
	By density arguments we see the above estimate still holds for any $u\in H_0^1(B_\sigma)$  with $H_0^1(B_\sigma)$ the classical Sobolev space.   Note that $\norm{\big(\partial_{x}+ \tau\inner{\partial_{x} p}  \big)u}_{L^2 }^2= \big(\Delta_{\tau p}^{(0)}u,~u\big)_{L^2}$ if $u\in  C_0^\infty(B_\sigma).$ 
	 \end{cor}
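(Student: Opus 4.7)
The plan is to view the corollary as the direct composition of Lemma~\ref{lemcr} with Theorem~\ref{prohel}, so the real work is merely to verify the hypothesis of the Helffer--Nourrigat criterion and then to reinterpret the right-hand side as the Dirichlet form of $\Delta_{\tau p}^{(0)}$.

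First I would apply Lemma~\ref{lemcr} with the hypothesis of the corollary: since the eigenvalue bound \eqref{lambdap} is assumed on $B_\sigma$, the lemma yields that every nonzero $q\in\mathcal L_{p,0}$ satisfies the global bound \eqref{eq+1} on $\mathbb R^n$, and therefore has no local minimum in $\mathbb R^n$. In particular, every nonzero $q\in\mathcal L_{p,0}\cap E_{r-1}$ has no local minimum, which is exactly the assumption required by Theorem~\ref{prohel}. Invoking Theorem~\ref{prohel} then produces a constant $c_0>0$ and a neighborhood $\Omega$ of $0$ such that
\begin{eqnarray*}
\norm{\partial_{x}u}_{L^2}^2+\tau^2\norm{(\partial_x p)u}_{L^2}^2\leq c_0\biginner{\Delta_{\tau p}^{(0)}u,u}_{L^2}+c_0\norm{u}_{L^2}^2
\end{eqnarray*}
for all $\tau>0$ and all $u\in C_0^\infty(\Omega)$.

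Next I would shrink $\sigma$ if necessary so that $B_\sigma\subset \Omega$; this is permitted because the hypothesis \eqref{lambdap} is preserved under shrinking the ball. To rewrite the right-hand side in the form stated in the corollary, I would use the standard algebraic identity for the Witten Laplacian on $0$-forms,
\begin{eqnarray*}
\biginner{\Delta_{\tau p}^{(0)}u,u}_{L^2}=\norm{\big(\partial_x+\tau(\partial_x p)\big)u}_{L^2}^2\quad \text{for } u\in C_0^\infty(B_\sigma),
\end{eqnarray*}
which follows from integration by parts and the fact that $\partial_x+\tau(\partial_x p)$ is the formal adjoint of $-\partial_x+\tau(\partial_x p)$ up to the gradient term appearing in $\Delta_{\tau p}^{(0)}$. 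Substituting gives the desired inequality on $C_0^\infty(B_\sigma)$.

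Finally, to extend the estimate from $C_0^\infty(B_\sigma)$ to $H_0^1(B_\sigma)$, I would use the density of $C_0^\infty(B_\sigma)$ in $H_0^1(B_\sigma)$ together with the fact that every term in the inequality is continuous with respect to the $H^1$-norm on a bounded domain (note that $\partial_x p$ is a polynomial and hence bounded on $B_\sigma$). There is no serious obstacle here; the only point one must be careful about is that the constant $c_0$ produced by Theorem~\ref{prohel} depends on $\Omega$, but since $\Omega$ and $\sigma$ are chosen in terms of $p$ alone, $c_0$ depends only on $p$, as claimed.
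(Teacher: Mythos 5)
Your proposal is correct and follows exactly the route the paper intends: the paper introduces Corollary~\ref{corcr} with the phrase ``as an immediate consequence of Theorem~\ref{prohel} and Lemma~\ref{lemcr}'' and gives no further proof, and your argument simply spells out that deduction (Lemma~\ref{lemcr} supplies the no-local-minimum hypothesis for all nonzero elements of $\mathcal L_{p,0}\cap E_{r-1}$, Theorem~\ref{prohel} then gives the estimate on some neighborhood $\Omega$ of $0$, and one shrinks $\sigma$ so that $B_\sigma\subset\Omega$, rewrites the Dirichlet form as $\norm{(\partial_x+\tau(\partial_x p))u}_{L^2}^2$ by integration by parts, and extends to $H_0^1(B_\sigma)$ by density). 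The only cosmetic quibble is the loose wording ``up to the gradient term'' in justifying the identity $(\Delta_{\tau p}^{(0)}u,u)_{L^2}=\norm{(\partial_x+\tau(\partial_x p))u}_{L^2}^2$; the clean statement is that $\partial_x+\tau(\partial_x p)$ and $-\partial_x+\tau(\partial_x p)$ are formal adjoints and $\Delta_{\tau p}^{(0)}$ is their composition, but the substance of your step is right.
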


\subsection{Localization} 
Here we  introduce   some partitions of unity related to a slowly varying metric.    Recall a metric $g$ is slowly varying if we can find two constant $C\geq 1$ and $r>0$
such that 
\begin{equation}\label{property}
	\forall~x,y\in\mathbb{R}^n,~~\quad g_{x}\inner{y-x}\leq r^2 \Longrightarrow
   \forall~T\in\mathbb R^n,\quad  C^{-1} \leq \frac{g_{x}(T)}{g_{y}(T)}\leq C.
\end{equation}
And we refer to   \cite{Hormander85,MR2599384} for more
details on the metrics and the symbol space related to a metric.

\begin{rem}\label{rem1}
In order to prove a metric $g$ is slowly varying we  ask only that 
 \begin{eqnarray*}
 	\exists~C\geq 1,\quad\exists~r>0,\quad  \forall~x,y,T\in\mathbb{R}^n,\quad g_x\inner{y-x}\leq r^2\Longrightarrow
  g_y(T) \leq Cg_x(T),
 \end{eqnarray*}
 which is sufficient to give the previous property \eqref{property}, see \cite[Remark 2.2.2]{MR2599384} for instance. 
\end{rem}

Now we define   $f$ by setting 
\begin{equation}\label{fv}
	f(x)= \sum_{1\leq \abs \alpha \leq k}\inner{1+\abs{\partial^\alpha V(x)}^2}^{\frac{1}{2\abs\alpha}},
\end{equation}
which is a regularization of the function $\tilde f$ in \eqref{defoffv}. Observe $\tilde f \leq f \leq C_k \inner{1+\tilde f}$ for some constant $C_k$ depending only on $k.$  
Let $V$ be the potential satisfying   \eqref{conupp} in Assumption \ref{assu}. Then   for any multi-index $\alpha$ with $\abs\alpha=k+1$ we can find a constant $C_\alpha$ depending on $\alpha$ such that 
\begin{equation}\label{weakcond}
	\forall~x\in\mathbb R^n, \quad  \abs{\partial^\alpha V(x)}	\leq C_\alpha f(x)^{k+1-\delta_2},
\end{equation}
with $\delta_2>0$  the arbitrarily small number given in \eqref{conupp}.      Moreover  letting $\eps>0$ be a small number to be determined further, we define the metric $g_\eps$ as follows.
\begin{equation}\label{7292}
  g_{x,\eps}=\eps f(x)^{2}\abs{dx}^2.
\end{equation}
Next we will show that the metric defined above is slowly varying. 

\begin{lem}\label{lem729}
Let $V$ be the potential satisfying the condition \eqref{weakcond}. Then the   metric defined by \eqref{7292} is slowly varying,  i.e., we can find two constants $C_*,r>0,$ depending only the constants in \eqref{weakcond} but independent of $\eps$,  such
  that if $g_{x,\eps}(y-x)\leq \eps r^2$ then $$C_*^{-2}\leq \frac{g_{x,\eps}}{g_{y,\eps}}\leq
  C_*^2.$$
\end{lem}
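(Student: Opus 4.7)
The plan is to reduce the slowly varying property, via Remark~\ref{rem1}, to producing constants $r,C>0$ independent of $\eps$ such that $\abs{y-x}\le r/f(x)$ implies $f(y)\le C f(x)$. Indeed, by \eqref{7292}, the condition $g_{x,\eps}(y-x)\le\eps r^2$ is exactly $\abs{y-x}\le r/f(x)$, and then $g_{y,\eps}(T)\le C^2 g_{x,\eps}(T)$ for every $T\in\mathbb R^n$. The factor $\eps$ cancels by homogeneity, which is why $C_\ast$ and $r$ can be taken independent of $\eps$.

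The main tool is Taylor's formula. For each multi-index $\alpha$ with $1\le\abs\alpha\le k$ and any $y'$ on the segment $[x,y]$, expand
\[
\partial^\alpha V(y')=\sum_{\abs\beta\le k-\abs\alpha}\frac{(y'-x)^\beta}{\beta!}\partial^{\alpha+\beta}V(x)+R_\alpha(y'),
\]
where $R_\alpha$ is the integral remainder built out of the $(k{+}1)$-st derivatives of $V$ on $[x,y']$. The definition \eqref{fv} of $f$ gives $\abs{\partial^{\alpha+\beta}V(x)}\le f(x)^{\abs{\alpha+\beta}}$ for $\abs{\alpha+\beta}\le k$; combined with $\abs{y'-x}\le r/f(x)$, the Taylor polynomial is then bounded by $(1+Cr)f(x)^{\abs\alpha}$. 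For the remainder, hypothesis \eqref{weakcond} provides $\abs{\partial^\gamma V(z)}\le C_\gamma f(z)^{k+1-\delta_2}$ for $\abs\gamma=k+1$; setting $M:=\sup_{t\in[0,1]}f\inner{x+t(y-x)}$, one obtains
\[
\abs{R_\alpha(y')}\le C\,r^{k+1-\abs\alpha}\,f(x)^{-(k+1-\abs\alpha)}\,M^{k+1-\delta_2}.
\]
Taking $1/\abs\alpha$-th roots, using subadditivity, summing over $\alpha$, and using $f(x)\ge 1$, $M\ge f(x)$ to consolidate the various exponents, one arrives at an estimate of the form
\[
f(y')\le C_1(1+Cr)\,f(x)+C_2\,r^{1/k}\,\inner{M/f(x)}^{k+1-\delta_2}f(x),
\]
where the exponent $1/k$ is the minimum of $(k+1-\abs\alpha)/\abs\alpha$ over $1\le\abs\alpha\le k$.

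The argument is closed by a continuity bootstrap on $\phi(t):=f\inner{x+t(y-x)}$. Fix $A>0$ larger than $2C_1$ plus the trivial contribution of the $+1$-terms in \eqref{fv}, and set $T:=\sup\bigset{t\in[0,1]:\phi(s)\le Af(x)\ \forall\,s\in[0,t]}$. On $[0,T]$ one has $M\le Af(x)$, so the estimate above yields
\[
\phi(T)\le\bigl(C_1(1+Cr)+C_3+C_4\,r^{1/k}A^{k+1-\delta_2}\bigr)f(x).
\]
Choosing $A$ once and for all, then $r$ small enough that $C_1Cr$ and $C_4\,r^{1/k}A^{k+1-\delta_2}$ are each less than $A/4$, forces the bracketed constant to be strictly less than $A$, contradicting $\phi(T)=Af(x)$ unless $T=1$. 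This yields $f(y)\le Af(x)$, proving the lemma with $C_\ast=A$. The main technical obstacle is precisely the superlinear growth $A^{k+1-\delta_2}$ of the remainder in the bootstrap parameter; it is absorbed by the smallness of the factor $r^{1/k}$, which is at our disposal and costs nothing in the final constants.
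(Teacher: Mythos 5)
Your proof is correct, and the core analytic content --- Taylor--expanding $\partial^\alpha V$ to order $k-\abs\alpha$ around $x$, bounding the remainder via \eqref{weakcond}, taking $1/\abs\alpha$--th roots, and absorbing the superlinear dependence on the bootstrap constant by the smallness of $r^{1/k}$ --- is exactly the paper's. The only genuine difference is how the bootstrap is organized. The paper defines the global function
$\psi_r(x)=\max_{\abs{z-x}\le r f(x)^{-1}}f(z)/f(x)$, verifies it is continuous, and then invokes the abstract bootstrap principle (Proposition \ref{probootst}, from Tao) on $\mathbb R^n$ with hypothesis $\psi_r(x)\le 2C_*$ and conclusion $\psi_r(x)\le C_*$. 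You instead fix the pair $(x,y)$ and run a classical continuity--connectedness argument along the segment, tracking $\phi(t)=f\big(x+t(y-x)\big)$ and the exit time $T$. Your version is arguably more elementary: it avoids introducing $\psi_r$ and checking its continuity, and it avoids the machinery of the abstract bootstrap lemma, at the modest cost of having to argue for each pair $(x,y)$ separately (which is harmless since the constants are uniform). Both routes require $A$ (resp.\ $C_*$) to be fixed \emph{before} $r$, precisely because the remainder contributes $r^{1/k}A^{k+1-\delta_2}$; you identify this correctly. One small thing worth making explicit in a clean write--up: you need $0<r<1$ so that $r^{(k+1-\abs\alpha)/\abs\alpha}\le r^{1/k}$ uniformly in $1\le\abs\alpha\le k$, and $f\ge 1$ (from \eqref{fv}) so that raising $M/f(x)$ to the smaller exponent $(k+1-\delta_2)/\abs\alpha$ can be replaced by the larger uniform exponent; you gesture at both but they are the hinges of the exponent bookkeeping.
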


In order to prove the above lemma we need the  bootstrap principle.  Here we refer to \cite[Proposition 1.21]{tao}.

\begin{prop}\label{probootst}
For each $x\in\mathbb R^n$ we have two statements, a ``hypothesis" $\mathbf H(x)$ and  a ``conclusion" $\mathbf C(x)$, with the following assertions listed subsequently fulfilled. 	
\begin{enumerate}[(i)]
		\item If   $\mathbf{C}(x)$ is true for some $x_0$ then   $\mathbf{H}(x)$ holds for all $x$ in a neighborhood of  $x_0$.
		\item If $x_j, j\geq 1,$ is a sequence in $\mathbb R^n$ which converges to some $\tilde x$, and if   $\mathbf{C}(x_j)$ is true for all $ j\geq 1,$ then   $\mathbf{C}(\tilde x)$ is true.
		\item  $\mathbf{H}(x)$ is true for at least one  $x\in\mathbb R^n.$
		\item  If $		 \mathbf{H}( x)$ is true for some $x\in\mathbb R^n$ then so is $\mathbf{C}( x) $  for the same $x.$
	\end{enumerate}
	Then   $\mathbf{C}( x)$ is true for all $x\in\mathbb R^n.$
\end{prop}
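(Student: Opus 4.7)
The plan is to prove $\mathbf{C}(x)$ holds for every $x\in\mathbb{R}^n$ by the classical clopen-set argument. I would introduce the set $S=\{x\in\mathbb{R}^n:\ \mathbf{C}(x)\text{ is true}\}$ and aim to show that $S$ is non-empty, open, and closed in $\mathbb{R}^n$; since $\mathbb{R}^n$ is connected this will force $S=\mathbb{R}^n$, which is exactly the desired conclusion.

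I would carry out the verification in three short steps, each of which is essentially forced by one of the four hypotheses. First, to see $S\neq\emptyset$, I would pick the point $x_\ast$ supplied by (iii), on which $\mathbf{H}(x_\ast)$ holds, and apply (iv) to obtain $\mathbf{C}(x_\ast)$, so that $x_\ast\in S$. Next, to prove that $S$ is open, I would take an arbitrary $x_0\in S$, use (i) to pass from $\mathbf{C}(x_0)$ to the validity of $\mathbf{H}$ on a whole neighborhood $U$ of $x_0$, and then apply (iv) pointwise on $U$ to upgrade $\mathbf{H}$ to $\mathbf{C}$ there; this yields $U\subseteq S$. Finally, to prove that $S$ is closed, I would take any sequence $\{x_j\}\subset S$ with $x_j\to\tilde x$ and invoke (ii) directly to conclude $\mathbf{C}(\tilde x)$, hence $\tilde x\in S$. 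Since $\mathbb{R}^n$ is metrizable, this sequential closedness is equivalent to topological closedness, so combining the three steps with the connectedness of $\mathbb{R}^n$ completes the argument.

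I do not expect any real obstacle in executing this plan. The only conceptual content lies in identifying the role played by each hypothesis: (iii) together with (iv) produces the seed point that makes $S$ non-empty, (i) combined once again with (iv) is precisely what propagates membership in $S$ to a neighborhood (i.e.\ drives openness), and (ii) is the direct statement that $S$ is sequentially closed. Once this role assignment is made explicit, the proposition reduces to the textbook fact that a non-empty clopen subset of a connected space is the whole space, and the proof fits in a handful of lines.
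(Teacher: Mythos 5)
Your proposal is correct and follows essentially the same route as the paper, which simply refers to Tao's bootstrap principle (Proposition 1.21 of his book) and notes that the same proof works with the interval $I$ replaced by $\mathbb{R}^n$; that proof is exactly the clopen-set argument you describe, using connectedness of $\mathbb{R}^n$ in place of connectedness of $I$. The role assignment you give to (i)--(iv) (seed point, openness, sequential closedness) matches the standard argument, and your appeal to metrizability to pass from sequential closedness to closedness is the right remark to make.
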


The proof of the proposition above is just the same as that in \cite[Proposition 1.21]{tao},  with  the time interval $I$ therein replaced by $\mathbb R^n.$

\begin{proof}[Proof of Lemma \ref{lem729}]
Note that 
\begin{eqnarray*}
 g_{x,\eps}(y-x)\leq \eps r^2 \Longleftrightarrow \abs{y-x}\leq   f(x)^{-{1}} r.
\end{eqnarray*}
 Then in view of Remark \ref{rem1}  we only need show that 
\begin{equation}\label{slow+}
    \exists~ r,C_*>0,~\forall~x,y\in\mathbb{R}^n,~~\quad \abs{y-x}\leq r   f(x)^{-{1}} \Longrightarrow
  f(y)\leq  C_* f(x).
  \end{equation}
To do so we use bootstrap arguments stated in Proposition \ref{probootst}.  Let $0<r<1$ to be determined later.  We define  a continuous function $\psi_r(x)$ by setting 
\begin{eqnarray*}
		\psi_r(x)=\max_{z\in\big\{\abs{z-x}\leq r f(x)^{-{1}}\big\}} \frac{f(z)}{f(x)}.
	\end{eqnarray*}
Let $ C_*>\psi_1(0)+1$ be a parameter to be chosen later, and let $\mathbf{H}(x)$	denote the statement that
\begin{eqnarray*}
	\psi_r(x)\leq 2C_*
\end{eqnarray*}
and let  $\mathbf{C}(x)$	denote the statement that
\begin{eqnarray*}
	\psi_r(x)\leq C_*.
\end{eqnarray*}
The continuity of $\psi_r$ gives the following assertions: 
	\begin{enumerate}[(i)]
		\item If   $\mathbf{C}(x)$ is true for some $x_0$ then   $\mathbf{H}(x)$ holds for all $x$ in a neighborhood of  $x_0$.
		\item If $x_j, j\geq 1,$ is a sequence in $\mathbb R^n$ which converges to some $\tilde x$, and if   $\mathbf{C}(x_j)$ is true for all $ j\geq 1,$ then   $\mathbf{C}(\tilde x)$ is true.
		\item  $\mathbf{H}(0)$ is true, recalling  $C_*>\psi_1(0)\geq \psi_r(0)$ for $0<r<1$.
	\end{enumerate}
Then by Proposition \ref{probootst} we see $\mathbf{C}(x)$ will be true for all $x\in\mathbb R^n$ if we can show that 
\begin{equation}\label{last sta}
	\begin{aligned}
		 \mathbf{H}( x) ~\textrm{is true for some} ~x\in\mathbb R^n\Longrightarrow  \mathbf{C}( x) ~\textrm{is also true for the same} ~x.
	\end{aligned}
\end{equation}

In the following arguments we will prove the property \eqref{last sta} under the hypothesis in \eqref{slow+}.  We will use $C_j \geq 1,  j\geq 1,$ to denote different constants which  depend only on the integer $k$ and the constants given in \eqref{weakcond}.   For any $\alpha$ with $1\leq \abs\alpha\leq k$  we have the Taylor expansion for $\partial^\alpha V:$ 
	\begin{equation}\label{est}
	\begin{aligned}
		\partial^\alpha V(y)&=\sum_{\abs\beta\leq k-\abs\alpha}\frac{\partial^{\beta+\alpha}V(x)}{\beta!}(y-x)^\beta\\
		+&\sum_{\abs\beta= k+1-\abs\alpha} \frac{\abs\beta}{\beta!}(y-x)^\beta\int_0^1(1-\theta)^{\abs\beta-1}\partial^{\beta+\alpha} V\inner{x+\theta(y-x)}d\theta.
		\end{aligned}
	\end{equation}
	  From  the definition of $f$ and the fact  $0<r<1$  it follows that   if $\abs{y-x}\leq r   f(x)^{-{1}}$ then
	\begin{equation}\label{est1}
		\Big|\sum_{\abs\beta\leq k-\abs\alpha}\frac{\partial^{\beta+\alpha}V(x)}{\beta!}(y-x)^\beta \Big|\leq  C_1f(x)^{\abs\alpha}.
	\end{equation}
Moreover for the last term in 
\eqref{est}, we  use \eqref{weakcond} to compute, supposing  $\abs{y-x}\leq r   f(x)^{-{1}},$ 
	\begin{eqnarray*}
		&&\Big|\sum_{\abs\beta= k+1-\abs\alpha} \frac{\abs\beta}{\beta!}(y-x)^\beta\int_0^1(1-\theta)^{\abs\beta-1}\partial^{\beta+\alpha} V\inner{x+\theta(y-x)}d\theta\Big|\\
		&\leq& C_2 \sum_{\abs\beta= k+1-\abs\alpha}    r^{\abs\beta} f(x)^{-\abs\beta}   \int_0^1   \Big(f\inner{x+\theta(y-x)}\Big)^{k+1-\delta_2} d\theta \\
		&\leq&r C_2  f(x)^{\abs\alpha-\delta_2} \sum_{\abs\beta= k+1-\abs\alpha}    \int_0^1  \Big(\frac{f\inner{x+\theta(y-x)}}{f(x)}\Big)^{k+1-\delta_2} d\theta \\
		&\leq&r C_3 f(x)^{\abs\alpha} \psi_r(x)^{k+1-\delta_2},
	\end{eqnarray*}
	the second inequality using the fact that $0<r<1$ and the last inequality following from the definition of $\psi_r.$  As a result the validity of $\mathbf{H}( x)$ gives that 
	\begin{eqnarray*}
		&&\Big|\sum_{\abs\beta= k+1-\abs\alpha} \frac{\abs\beta}{\beta!}(y-x)^\beta\int_0^1(1-\theta)^{\abs\beta-1}\partial^{\beta+\alpha} V\inner{x+\theta(y-x)}d\theta\Big|\\
		&\leq& rC_3 f(x)^{\abs\alpha} (2C_*)^{k+1-\delta_2},
	\end{eqnarray*}
	which along with \eqref{est}-\eqref{est1} yields that  for any $\alpha$ with $1\leq \abs\alpha\leq k$ and for any $y\in\mathbb R^n$ with $\abs{y-x}\leq r   f(x)^{-{1}},$ 
 we have
	\begin{eqnarray*}
		\abs{\partial^\alpha V(y)}\leq C_1f(x)^{\abs\alpha}+rC_3 f(x)^{\abs\alpha} (2C_*)^{k+1-\delta_2}
	\end{eqnarray*}
and thus,  observing $C_1, C_3, C_*\geq 1,$
\begin{eqnarray*}
		\abs{\partial^\alpha V(y)}^{1/\abs\alpha}&\leq& C_1 f(x) +r^{1/\abs\alpha}C_3 f(x) (2C_*)^{(k+1-\delta_2)/\abs\alpha}\\
		&\leq& C_1 f(x) +r^{1/k}C_3 f(x)  (2C_*)^{k+1}.
	\end{eqnarray*}
	This implies, in view of the definition of $f,$ 
	\begin{eqnarray*}
	f(y)	\leq C_4  f(x) +r^{1/k}C_4 f(x)  (2C_*)^{k+1},
	\end{eqnarray*}
	that is,
	\begin{eqnarray*}
		\frac{f(y)}{f(x)}	\leq C_4   +r^{1/k}C_4    (2C_*)^{k+1}.
	\end{eqnarray*}
 Observe the above inequality  holds for all $y$ such that $\abs{y-x}\leq r   f(x)^{-{1}}$ and thus  
 \begin{equation}\label{fiest}
 	\psi_r(x)\leq  C_4   +r^{1/k}C_4    (2C_*)^{k+1}.
 \end{equation}
 Now we choose $C_*$  such that
 \begin{eqnarray*}
 	C_*=2C_4+\psi_1(0)+1
 \end{eqnarray*}
 and choose  such a  small $r$ that
 \begin{eqnarray*}
 	r^{1/k}     (2C_*)^{k+1}\leq 1.
 \end{eqnarray*}
 Then \eqref{fiest} gives $\psi_r(x)<2C_4<C_*$ and thus $\mathbf{C}(x)$ holds, completing the proof of the property \eqref{last sta}.   As a result we use Proposition \ref{probootst} to conclude that 
 \begin{eqnarray*}
 	\psi_r(x)\leq C_*
 \end{eqnarray*}
 for all $x\in\mathbb R^n, $  with the constants $C_*$ and $r$ chosen above.  This  yields the assertion \eqref{slow+} as desired, completing the proof of Lemma \ref{lem729}.
\end{proof}

Let $g_\eps$ be the   metric given by \eqref{7292}.  We denote by $S(1,g_\eps)$
the class of smooth real-valued functions $a(x)$ satisfying
  the following condition:
  \begin{eqnarray*}
    \forall~\gamma\in \mathbb{Z}_+^n,~~\forall~x\in\mathbb{R}^n,\quad
    \abs{\partial^\gamma a(x)}\leq C_\gamma \inner{\eps^{1/2}f(x)}^{ \abs\gamma},
  \end{eqnarray*}
  with $C_\gamma$ the constants depending only on $\gamma,$ but independent of $\eps.$
  The space $S(1,g_\eps)$ endowed with the seminorms
  \[
    \abs{a}_{\ell,S(1,g_\eps)}=\sup_{x\in\mathbb{R}^n,\abs{\gamma}=\ell}\inner{\eps^{1/2}f(x)}^{-\abs\gamma}\abs{\partial^\gamma a(x)},\quad
    \ell\geq 0,
  \]
  becomes a Fr\'{e}chet space.

The main feature of a slowly varying metric is that it enables us to
introduce some partitions of unity related to the metric.  We can apply 
\cite[Lemma 1.4.9 and Theorem 1.4.10]{Hormander85} to $\norm{y}_x=\inner{g_{x,\eps}(y)/(\eps r^2)}^{1/2}$ with $r$ the number given in Lemma \ref{lem729}; this gives 
  the following lemma (see also \cite[Lemma 18.4.4]{Hormander85} with $c$ therein replaced by $\eps r^2$).

\begin{lem}[Partition of unity]\label{uni}
  Let $g_\eps$ be the   metric given by \eqref{7292} and let $r, C_*$ be the constants given in Lemma \ref{lem729}.  We can find   a sequence $x_\mu\in\mathbb{R}^n,\mu\geq1,$ such
  that the union of the balls
  \[
    \Omega_{\mu,\eps,r}=\set{x\in\mathbb{R}^{n};\quad g_{x_\mu,\eps}\inner{x-x_\mu}<\frac{\eps r^2}{2}}
  \]
  covers  the  whole space $\mathbb{R}^{n}.$ Moreover there exists a positive integer
  $N ,$ depending only  on $C_*$ and the dimension  $n$ but independent of $\eps,$  such that the intersection
  of more than $N $ balls is always empty.
  One can choose a
  family of nonnegative functions $\set{\varphi_{\mu,\eps}}_{\mu\geq 1}$ uniformly bounded in
  $S(1,g_\eps)$ with respect to $\mu,$ such that
 \begin{equation*}
   {\rm supp}~ \varphi_{\mu,\eps}\subset\Omega_{\mu,\eps,r},\quad
   \sum_{\mu\geq1} \varphi_{\mu,\eps}^2 =1~~\,\,{\rm and }~~\,\,
   \sup_{\mu\geq1}\abs{\partial_x \varphi_{\mu,\eps} (x)}\leq C\eps^{1/2} f(x),
\end{equation*}
where $C$ is a constant independent of $\eps.$
  Here by  uniformly bounded in
  $S(1,g_\eps)$ with respect to $\mu,$ we mean
  \begin{eqnarray*}
     \sup_{\mu}\abs{\varphi_{\mu,\eps}}_{\ell,S(1,g_\eps)}\leq C_{\ell}, \quad \ell\geq
     0,
  \end{eqnarray*}
  with $C_\ell$ constants depending only on $\ell.$
\end{lem}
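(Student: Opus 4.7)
The plan is to transcribe H\"ormander's standard machinery (\cite[Lemma 1.4.9 and Theorem 1.4.10]{Hormander85}) for partitions of unity subordinate to slowly varying metrics, applied to the normalised family of norms $\norm{y}_x=\inner{g_{x,\eps}(y)/(\eps r^2)}^{1/2}=f(x)\abs{y}/r$. The condition $\norm{y-x}_x\leq 1$ translates exactly to $g_{x,\eps}(y-x)\leq \eps r^2$, so Lemma \ref{lem729} furnishes the slow-variation property $C_*^{-1}\norm{T}_x\leq \norm{T}_y\leq C_*\norm{T}_x$ with a constant $C_*$ that is independent of $\eps$. This is the hypothesis needed to run H\"ormander's construction verbatim.

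First, I would extract the centres $x_\mu$ by a Vitali-type selection: choose a maximal family of points whose half-sized balls $\set{y;~\norm{y-x_\mu}_{x_\mu}<1/(2\sqrt 2)}$ are pairwise disjoint. Maximality forces the full-sized balls $\Omega_{\mu,\eps,r}=\set{y;~\norm{y-x_\mu}_{x_\mu}<1/\sqrt 2}$ to cover $\mathbb R^n$. The bounded-overlap constant $N$ then comes from a volume-packing argument: any point lying in several $\Omega_{\mu,\eps,r}$ forces the corresponding centres to lie within fixed $\norm{\cdot}$-distance of one another, and by slow variation their pairwise-disjoint small half-balls all sit inside a single enlarged ball, which bounds their cardinality by a constant depending only on $C_*$ and $n$.

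Next, I would fix a radial cut-off $\chi\in C_0^\infty(\mathbb R^n)$ with $\chi\equiv 1$ on $\set{\abs z\leq 1/(2\sqrt 2)}$ and $\textrm{supp}\,\chi\subset\set{\abs z<1/\sqrt 2}$, set $\tilde\varphi_{\mu,\eps}(x)=\chi\inner{f(x_\mu)(x-x_\mu)/r}$, and define
\begin{eqnarray*}
\varphi_{\mu,\eps}(x)=\frac{\tilde\varphi_{\mu,\eps}(x)}{\big(\sum_\nu \tilde\varphi_{\nu,\eps}(x)^2\big)^{1/2}}.
\end{eqnarray*}
Each $\tilde\varphi_{\mu,\eps}$ is then supported in $\Omega_{\mu,\eps,r}$, every $x$ is captured by at least one half-sized ball so the denominator is bounded below by $1$, and the construction immediately yields $\textrm{supp}\,\varphi_{\mu,\eps}\subset\Omega_{\mu,\eps,r}$ together with $\sum_\mu \varphi_{\mu,\eps}^2\equiv 1$.

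The principal technical point, and the one I expect to demand the most care, is the uniform $S(1,g_\eps)$ seminorm estimate. Direct differentiation gives $\abs{\partial^\gamma\tilde\varphi_{\mu,\eps}(x)}\leq C_\gamma (f(x_\mu)/r)^{\abs\gamma}$, and the slow variation provided by Lemma \ref{lem729} lets one replace $f(x_\mu)$ by $f(x)$ on the support up to the constant $C_*$. To transport these bounds through the normalisation I would apply the Fa\`a di Bruno formula to $t\mapsto t^{-1/2}$ together with the bounded-overlap property: at any $x$ the denominator is a sum of at most $N$ nonzero terms, lies between $1$ and $\sqrt N$, and each summand together with its derivatives already satisfies the desired estimates, so all Leibniz and chain-rule factors remain under control uniformly in $\mu$. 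This yields the uniform membership in $S(1,g_\eps)$ and, as the $\abs\gamma=1$ specialisation, the pointwise gradient estimate $\abs{\partial_x\varphi_{\mu,\eps}(x)}\leq C\eps^{1/2} f(x)$.
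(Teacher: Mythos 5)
Your proposal is correct and follows essentially the same route as the paper: the paper simply invokes \cite[Lemma 1.4.9 and Theorem 1.4.10]{Hormander85} applied to the normalized norms $\norm{y}_x=\inner{g_{x,\eps}(y)/(\eps r^2)}^{1/2}$, with slow variation supplied by Lemma \ref{lem729}, and your argument unpacks exactly that construction (Vitali-type selection of centres, volume-packing bound for the overlap $N$, rescaled cut-offs normalized by the square root of the sum of squares, with Leibniz/Fa\`a di Bruno giving the uniform $S(1,g_\eps)$ bounds). The only genuine content in the paper's proof is the observation that the slow-variation constant $C_*$ from Lemma \ref{lem729} is $\eps$-independent, which is what makes $N$ and the seminorm bounds independent of $\eps$, and you have that.
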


\begin{rem}\label{7308}
It follows from Lemma \ref{lem729} that for any
  $\mu\geq1$ one has
\begin{equation}\label{7310}
  \forall~x,y\in \Omega_{\mu,\eps,r},\quad C_*^{-1}f(y) \leq f(x) \leq C_* f(y),
\end{equation}
where $C_*$ is the constant given in Lemma \ref{lem729}.
\end{rem}

 \subsection{Proof of Theorem \ref{thmmain}} This part is devoted to proving Theorem \ref{thmmain}, and we only need to prove  the estimates \eqref{cesc} and  \eqref{lastestimate}, and the compactness of the resolvent for Witten Laplacian will follow immediately from these estimates due to (iii) in Assumption \ref{assu}.  In the proof  we let $\tilde f,  f$ be the functions introduced respectively in \eqref{defoffv} and \eqref{fv}, satisfying that
\begin{eqnarray}\label{refv}
	\tilde f \leq f \leq C_k \inner{1+\tilde f}
\end{eqnarray}
for some constant $C_k$ depending only on $k.$  
Recall $ E_k$ is the set of polynomials with degree less than or equal to $k,$ and $B_\sigma$ denotes the ball centered at $0$ with radius $\sigma.$

\begin{proof}[Proof of  Theorem \ref{thmmain} (Maximal estimate)] 
We begin with the first assertion \eqref{cesc} and  will prove it  by  contradiction.  To do so   suppose that, contrary to  \eqref{cesc}, for any $\ell \geq 1$ and for any $\tau>0,$  there exists a function $u_\ell=u_{\ell,\tau}\in C_0^\infty(\mathbb R^n)$ with $u_\ell \not\equiv 0,$  such that   
\begin{eqnarray}\label{upes}
     \norm{\tilde f u_\ell}_{L^2 }^2
      > \ell
      \inner{\triangle_{ \tau V}^{(0)} u_\ell,~u_\ell}_{L^2 }+ \ell\norm{u_\ell}_{L^2 }^2.
\end{eqnarray} 	
Here and throughout the proof we will write $u_{\ell}$ instead of $u_{\ell,\tau},$ omitting the dependence of $\tau,$ to simplify the notation. 
For given     $0<\eps\leq 1$  to be determined further, we let  $\set{\varphi_{\mu,\eps}}_{\mu\geq 1}$ be the partition of unity given  in Lemma \ref{uni}, which satisfies that  
\begin{eqnarray*}
	{\rm supp}\,\varphi_{\mu,\eps}\subset \Omega_{\mu,\eps,r}=\Big\{x\in\mathbb R^n; \quad  \abs{x-x_{\mu}}< \frac{r}{\sqrt 2 f(x_{\mu})}\Big\}
\end{eqnarray*}
with  $r>0$ the number given in Lemma  \ref{lem729} and that  
\begin{equation}\label{unup}
\abs{\partial_x \varphi_{\mu,\eps}}\leq \tilde C_*\,\eps^{1/2} f	
\end{equation}
with $\tilde C_*$ a constant independent of $\eps$ and $\mu.$   To simplify the notations  we will use $C_j, j\geq 1,$ in the following discussions to denote the suitable constants which depend on $\tilde C_*$ above but are independent of $\eps, \tau, \mu$ and $\ell.$   By  the IMS localization formula (cf. \cite[Theorem 3.2]{cfks})  we obtain
 \begin{multline*}
 	 \inner{\triangle_{\tau V}^{(0)} u_\ell,~u_\ell}_{L^2 } =  \sum_{\mu\geq 1} \inner{\triangle_{\tau V}^{(0)}\inner{ \varphi_{\mu,\eps} u_\ell},~ \varphi_{\mu,\eps} u_\ell}_{L^2 }-\sum_{\mu\geq 1}\norm{ \inner{\partial_x\varphi_{\mu,\eps}}  u_\ell}_{L^2}^2\\
 	  \geq  \sum_{\mu\geq 1} \inner{\triangle_{\tau V}^{(0)}\inner{ \varphi_{\mu,\eps} u_\ell},~ \varphi_{\mu,\eps} u_\ell}_{L^2 }-C_1 \,\eps \sum_{\mu\geq 1}\norm{f(x)  \varphi_{\mu,\eps}   u_\ell}_{L^2}^2,
 \end{multline*}
 where the last inequality follows from \eqref{unup} and the fact that  the intersection
  of more than $N$ balls $\Omega_{\mu, \eps,r}$ is always empty with $N$ a fixed integer given in Lemma \ref{uni} independent of $\eps$.   As a result we combine \eqref{upes} and the above estimate to conclude   
 	 	\begin{multline*}
 \sum_{\mu\geq 1}  \norm{ \tilde f  \varphi_{\mu,\eps}u_\ell}_{L^2 }^2\\
      >   \sum_{\mu\geq 1}   \ell  \bigg[\inner{\triangle_{\tau V}^{(0)}\inner{ \varphi_{\mu,\eps} u_\ell},~ \varphi_{\mu,\eps} u_\ell}_{L^2 }+  \norm{ \varphi_{\mu,\eps}   u_\ell}_{L^2}^2-C_1\,\eps    \norm{f(x)  \varphi_{\mu,\eps}   u_\ell}_{L^2}^2\bigg].
    \end{multline*}
    Thus for any $\ell$,  there exists a positive integer $\mu_\ell,$ depending only on $\ell,$ such that 
 	  \begin{multline*} 
    \norm{ \tilde f \varphi_{\mu_\ell,\eps} u_\ell}_{L^2 }^2 \\
      >   \ell  \bigg[\inner{\triangle_{\tau V}^{(0)}\inner{ \varphi_{\mu_\ell,\eps} u_\ell},~ \varphi_{\mu_\ell,\eps} u_\ell}_{L^2 }+  \norm{ \varphi_{\mu_\ell,\eps}   u_\ell}_{L^2}^2- C_1\,\eps    \norm{f(x)  \varphi_{\mu_\ell,\eps}   u_\ell}_{L^2}^2\bigg].
 	    \end{multline*}
 	   As a result  we use  \eqref{refv} to conclude that, for all $\ell$ large enough such that $  \ell>1/\eps,$ 
 	    \begin{eqnarray} \label{evar}
 	    \begin{split}
    0 
      > &   \ell  \bigg[\inner{\triangle_{\tau V}^{(0)}\inner{ \varphi_{\mu_\ell,\eps} u_\ell},~ \varphi_{\mu_\ell,\eps} u_\ell}_{L^2 }+\inner{1-\eps C_2}  \norm{ \varphi_{\mu_\ell,\eps}   u_\ell}_{L^2}^2\\
      &\qquad-\eps\,  C_2     \norm{\tilde f(x)  \varphi_{\mu_\ell,\eps}   u_\ell}_{L^2}^2\bigg],
      \end{split}
 	    \end{eqnarray}
with 
\begin{eqnarray*}
 {\rm supp}\,  (\varphi_{\mu_\ell,\eps}   u_\ell)\subset\Omega_{\mu_\ell,\eps,r}= \Big\{x\in\mathbb R^n; \quad  \abs{x-x_{\mu_\ell}}< \frac{r}{\sqrt 2 f(x_{\mu_\ell})}\Big\}.  
 \end{eqnarray*}
 We claim  that there exists a subsequence $\big\{x_{\mu_{\ell_j}}\big\}_{j\geq 1} $ of $  x_{\mu_\ell} $  such that 
\begin{equation}\label{beinf}
\lim_{j\rightarrow+\infty}  |x_{\mu_{\ell_j}}| =+\infty. 
\end{equation}
Otherwise we can find a constant $R>0$ such that
\begin{eqnarray*}
	\forall~\ell\geq 1,\quad |x_{\mu_\ell}|\leq R,
\end{eqnarray*}
which yields, using the notation $M_R\stackrel{\rm def}{ =}\max_{\abs{x}\leq R} f(x)$ and observing $f\geq 1,$  
\begin{eqnarray}\label{suppv}
     \Big\{ x;~  \abs{x-x_{\mu_\ell}} <\frac{  r}{\sqrt 2 M_R} \Big\} \subset \Omega_{\mu_\ell, \eps,r}=\Big\{x; ~  \abs{x-x_{\mu_\ell}}< \frac{r}{\sqrt 2 f(x_{\mu_\ell})}\Big\}
\end{eqnarray}
and
\begin{eqnarray*}
	  \bigcup_{\ell\geq 1} \Omega_{\mu_\ell, \eps,r}\subseteq \Big\{x\in\mathbb R^n; \quad  \abs{x}< R+ 2^{-1/2}  r  \Big\}.
\end{eqnarray*}
We then have a contradiction, since 
the Lebesgue measure of the set on the right hand side is finite and independent of $\ell$, meanwhile  the Lebesgue measure of  
\begin{eqnarray*}
	 \bigcup_{\ell\geq 1} \Omega_{\mu_\ell, \eps,r} 
\end{eqnarray*}
is $+\infty$ due to \eqref{suppv} and the fact that the intersection
  of more than $N$ balls $\Omega_{\mu_\ell, \eps,r}$ is always empty. The contradiction implies the conclusion \eqref{beinf} and thus
    \begin{equation}\label{longbe}
 \lim_{j\rightarrow+\infty} f(x_{\mu_{\ell_j}})=+\infty	
  \end{equation}
 because of   the statement  (iii) in Assumption \ref{assu}.

Now we denote 
\begin{eqnarray*}
v_j=\varphi_{\mu_{\ell_j},\eps}u_{\ell_j},\quad j\geq 1.
\end{eqnarray*}
Then
\begin{eqnarray*}
{\rm supp}\, v_j \subset  \Big\{x\in\mathbb R^n; \quad  \abs{x-x_{\mu_{\ell_j}}}< \frac{r}{\sqrt 2 f(x_{\mu_{\ell_j}})}\Big\},
\end{eqnarray*}
and furthermore,   
in view of  \eqref{evar}, 
\begin{eqnarray} \label{cruest}
  0
   >     \inner{\triangle_{\tau V}^{(0)}v_j ,~ v_j}_{L^2 }+ \inner{1-\eps\,C_2} \norm{ v_j}_{L^2}^2- \eps \,C_2   \norm{\tilde f(x)  v_j}_{L^2}^2.
\end{eqnarray}
In the following discussion we will derive a contradiction through several steps, starting from the estimate \eqref{cruest}.

\bigskip
\noindent \underline{{\it Step 1}}.  
We define 
\begin{eqnarray*}
	w_j(x)=v_j\inner{x_{\mu_{\ell_j}}+f(x_{\mu_{\ell_j}})^{-1}x}.
\end{eqnarray*}
Then  $w_j\in C_0^\infty(\mathbb R^n)$ with
\begin{eqnarray*}
{\rm supp}\,w_j\subset \Big\{x\in\mathbb R^n; ~\abs{x}< r/\sqrt 2\Big\},\quad j\geq 1.
\end{eqnarray*}
Using the changes of variable $x= x_{\mu_{\ell_j}}+f(x_{\mu_{\ell_j}})^{-1}y$ in \eqref{cruest} for  the $L^2$ integration,  we obtain
    \begin{equation}\label{cruest1}
    	0
   > \inner{\triangle_{\tau q_j}^{(0)}w_j ,~ w_j}_{L^2 }+  \frac{1-\eps\, C_2}{ f(x_{\mu_{\ell_j}})^2 }\norm{ w_j}_{L^2}^2- \eps \,C_2  \norm{\tilde f_{\tau q_j} w_j}_{L^2}^2,
    \end{equation}
 where
  \begin{eqnarray}\label{qofj}
 	  	q_j(x)=V\big(x_{\mu_{\ell_j}}+f(x_{\mu_{\ell_j}})^{-1}x\big)-V(x_{\mu_{\ell_j}})
 	  \end{eqnarray} 
 	    and
 \begin{eqnarray*}
 	  	\tilde f_{\tau q_j}(x)=\sum_{1\leq \abs\alpha \leq k} \tau^{1/\abs\alpha}\abs{ \partial^\alpha  q_j(x)}^{1/\abs\alpha}.
 \end{eqnarray*}
The inequality \eqref{cruest1} implies
\begin{eqnarray*} 
  \norm{w_j}_{L^2}+\norm{\partial_x w_j}_{L^2}
     >0 
  \end{eqnarray*} and thus we can define  
    \begin{eqnarray*}
    	\zeta_j=\frac{w_j}{\inner{\norm{w_j}_{L^2}^2+\norm{\partial_x w_j}_{L^2}^2}^{1/2}}.
    \end{eqnarray*}
    Then we have, recalling   $B_{\sigma}= \big\{x\in\mathbb R^n; \quad  \abs{x}<  \sigma  \big\},$  
    \begin{eqnarray}\label{uninor}
    \zeta_j\in C_0^\infty(B_{r/\sqrt2}), \qquad  \norm{\zeta_j}_{L^2}^2+\norm{\partial_x\zeta_j}_{L^2}^2=1,
    \end{eqnarray}
    and, dividing both sides of 
   \eqref{cruest1} by the factor $ \norm{w_j}_{L^2}^2+\norm{\partial_x w_j}_{L^2}^2,$
    \begin{equation*}
    	0
   >  \inner{\triangle_{\tau q_j}^{(0)}\zeta_j ,~ \zeta_j}_{L^2 }+  \frac{1-\eps\, C_2}{ f(x_{\mu_{\ell_j}})^2 }\norm{ \zeta_j}_{L^2}^2- \eps \,C_2 \norm{\tilde f_{\tau q_j}  \zeta_j}_{L^2}^2.
    \end{equation*}
Thus    \begin{equation}\label{cruc1}
     \liminf_{j\rightarrow+\infty}\Big[ \inner{\triangle_{\tau q_j}^{(0)}\zeta_j ,~ \zeta_j}_{L^2 } - \eps \,C_2 \norm{\tilde f_{\tau q_j} \zeta_j}_{L^2}^2\Big] 
 	 \leq   0,
    \end{equation}
  since  it follows from \eqref{longbe} that 
  \begin{eqnarray*}
  	 \frac{1-\eps\, C_2}{ f(x_{\mu_{\ell_j}})^2 }\norm{ \zeta_j}_{L^2}^2\rightarrow 0~\textrm{as}~\rightarrow +\infty.
  \end{eqnarray*}
  
  \bigskip
\noindent  \underline{{\it Step 2}}. Let $q_j$ be given in \eqref{qofj} with $V$ satisfying Assumption \ref{assu}.    We will prove here  that there exists a subsequence of $q_j,$ still denoted by $q_j$, and a polynomial $p\in E_k\setminus\set{0} ,$ such that 
 \begin{equation}\label{conv}
\forall~0\leq \abs\beta\leq k,~  \forall~ x\in B_{r/\sqrt 2},  \quad\lim_{j\rightarrow+\infty} \partial^\beta q_j(x) =\partial^\beta p (x),	
 \end{equation}
 and that, 
  using the notations given in \eqref{Ixprho} and \eqref{mrho}, 
  \begin{equation}\label{conforp}
	\forall~x\in B_{r/\sqrt 2},\quad	\sum_{j\in I_{x,p}} \lambda_{p,j}(x)\leq  C_3 \inner{\mathcal M_p(x)+ \abs{\partial_x
  p(x)}^2}.
\end{equation}

We begin with the proof of \eqref{conv}.  To do so we  
use Taylor's expansion to write
 	  \begin{eqnarray}\label{extaylor}
 	  \begin{split}
 	  	&q_j(x)=\sum_{1\leq\abs{\alpha}\leq k}\frac{  f(x_{\mu_{\ell_j}})^{-\abs{\alpha}}\partial^{\alpha} V(x_{\mu_{\ell_j}})}{\alpha !}x^{\alpha}\\
 	  	&+   f(x_{\mu_{\ell_j}})^{-(k+1)}  \sum_{\abs\alpha=k+1} \frac{\abs\alpha x^{\alpha}}{\alpha !} \int_{0}^{1} (1-\theta)^{k}\partial^{\alpha}  V\big(x_{\mu_{\ell_j}}+   \theta x f(x_{\mu_{\ell_j}})^{-1}  \big)  d\theta.
 	  	\end{split}
 	  \end{eqnarray}
 	  Moreover observe $f(x_{\mu_{\ell_j}})^{-\abs{\alpha}}\partial^{\alpha} V(x_{\mu_{\ell_j}}), 1\leq\abs\alpha\leq k,$  is an uniformly bounded sequence with respect to $j$ and thus we can find a  subsequence, still denoted by  $f(x_{\mu_{\ell_j}})^{-\abs{\alpha}}\partial^{\alpha} V(x_{\mu_{\ell_j}})$, such that 
 	\begin{equation}\label{alapha+}
 		\forall~1\leq \abs\alpha\leq k,\quad \lim_{j\rightarrow+\infty} f(x_{\mu_{\ell_j}})^{-\abs{\alpha}}\partial^{\alpha} V(x_{\mu_{\ell_j}}) =A_\alpha.  	
 		 \end{equation}
 	This gives  
 	\begin{eqnarray*}
 		\sum_{1\leq\abs\alpha\leq k} \abs{A_\alpha}^{1\over\abs\alpha}=\lim_{j }  f(x_{\mu_{\ell_j}})^{-1} \sum_{1\leq\abs\alpha\leq k}\abs{ \partial^{\alpha} V(x_{\mu_{\ell_j}}) }^{1\over\abs\alpha}=\lim_{j\rightarrow \infty}  \frac{\tilde f(x_{\mu_{\ell_j}})}{f(x_{\mu_{\ell_j}})}  >0,
 	\end{eqnarray*}
 	 	the last inequality using   \eqref{refv}  and the fact that $\tilde f(x_{\mu_{\ell_j}})\rightarrow+\infty$ as $j\rightarrow+\infty.$  As a result, defining  $p$ by
 	 	\begin{equation}\label{defofp+}
 	 		p=\sum_{1\leq \abs\alpha\leq k} \frac{A_\alpha }{\alpha!}x^\alpha, 
 	 	\end{equation}
 	 	we see $p\in E_k\setminus\big\{0\big\}$ and  the first term on the right side of \eqref{extaylor} converges to $p(x).$
 In order to treat the remainder term in \eqref{extaylor} we use \eqref{conupp}  and \eqref{7310} to obtain that,   for any $\gamma$ with $   \abs\gamma= k+1,$  and for any   $\abs{x}<   r/\sqrt 2$ and any $\theta\in[0,1],$
   \begin{eqnarray*}
   	\abs{\partial^{\gamma}  V\inner{x_{\mu_{\ell_j}}+   \theta x f(x_{\mu_{\ell_j}})^{-1}  }} 
    &\leq&    C_\gamma \Big[f\big(x_{\mu_{\ell_j}}+   \theta x f(x_{\mu_{\ell_j}})^{-1}  \big)\Big] ^{k+1-\delta_2}\\
   	 &\leq&   \tilde C_{\gamma}  f(x_{\mu_{\ell_j}})^{k+1-\delta_2},   \end{eqnarray*}
   	where  $\delta_2>0$ is an arbitrary small number and  $C_{\gamma},\tilde C_{\gamma}$ are two constants depending only on $\gamma.$  
   	This implies, for any $x\in B_{r/\sqrt 2},$
   \begin{eqnarray*}
   	  && f(x_{\mu_{\ell_j}})^{-(k+1)}  \sum_{\abs\alpha=k+1} \bigg|  \frac{\abs\alpha x^{\alpha}}{\alpha !} \int_{0}^{1} (1-\theta)^{k} \partial^{\alpha}  V(x_{\mu_{\ell_j}}+  \theta  xf(x_{\mu_{\ell_j}})^{-1}  ) d\theta  \bigg|  \\
   	  &  \leq &  C_4  f(x_{\mu_{\ell_j}})^{-\delta_2}\longrightarrow 0, ~~{\rm as}~~  j\rightarrow+\infty,
   	  \end{eqnarray*}
   	  the last line using \eqref{longbe}.
   	    As a result we have
   	   \begin{eqnarray*}
   	   	\forall~ x\in B_{r/\sqrt 2},  \quad\lim_{j\rightarrow+\infty}   q_j(x) = p (x).
   	   \end{eqnarray*}
 Similarly,	  
 	    for any $1\leq \abs\beta\leq k,$
 	  \begin{eqnarray*}
 	  	&& \partial^\beta q_j(x)=f(x_{\mu_{\ell_j}})^{-\abs{\beta}}\big(\partial ^{\beta} V\big)\big(x_{\mu_{\ell_j}}+f(x_{\mu_{\ell_j}})^{-1}x\big) \\
 	  	&=& f(x_{\mu_{\ell_j}})^{-\abs{\beta}}\sum_{0\leq\abs{\gamma}\leq k-\abs\beta}\frac{  f(x_{\mu_{\ell_j}})^{-\abs{\gamma}}\partial ^{\gamma+\beta} V(x_{\mu_{\ell_j}})}{\gamma !} x^{\gamma}\\
 	  	&&  +   f(x_{\mu_{\ell_j}})^{-(k+1)}  \sum_{\abs\gamma=k+1-\abs\beta}  \frac{ \abs\gamma x^{\gamma}}{\gamma !}\times  \\
&& \qquad\qquad\qquad \times\int_{0}^{1}  (1-\theta)^{\abs\gamma-1}\partial^{\gamma+\beta}  V\big(x_{\mu_{\ell_j}}+ \theta x  f(x_{\mu_{\ell_j}})^{-1}  \big) d\theta,
 	  \end{eqnarray*}
 	  with the remainder term above trending to $0$ as $j\rightarrow+\infty$ for any $x\in B_{r/\sqrt 2}.$   Meanwhile for the first term on the right hand side, we have 
 	  \begin{eqnarray*}
 	  &&	f(x_{\mu_{\ell_j}})^{-\abs{\beta}}\sum_{0\leq\abs{\gamma}\leq k-\abs\beta}\frac{  f(x_{\mu_{\ell_j}})^{-\abs{\gamma}}\partial ^{\gamma+\beta} V(x_{\mu_{\ell_j}})}{\gamma !} x^{\gamma}\\
 	  &=& \sum_{\stackrel{\alpha\geq\beta}{ \abs{\alpha}\leq k}}\frac{  f(x_{\mu_{\ell_j}})^{-\abs{\alpha}}\partial ^{\alpha} V(x_{\mu_{\ell_j}})}{\inner{\alpha-\beta} !} x^{\alpha-\beta} \longrightarrow  \sum_{\stackrel{\alpha\geq\beta}{ \abs{\alpha}\leq k}}\frac{ A_\alpha}{\inner{\alpha-\beta} !} x^{\alpha-\beta}=\partial^\beta p(x)
 	  \end{eqnarray*}
 as $j\rightarrow+\infty,$ the last line using \eqref{alapha+} and \eqref{defofp+}.  Combining the above relations we obtain the first assertion \eqref{conv}. 

It remains to show \eqref{conforp}.  Recall 
\begin{eqnarray*}
q_j(x)=V(x_{\mu_{\ell_j}}+f(x_{\mu_{\ell_j}})^{-1}x)-V(x_{\mu_{\ell_j}}).
\end{eqnarray*}
It then follows from \eqref{conv} that for any $1\leq \abs\beta\leq 2$ and  for any $x\in B_{r/\sqrt 2}$ we have
\begin{eqnarray*}
 \partial^\beta p(x)=\lim_{j\rightarrow+\infty}\partial^\beta q_j(x)=\lim_{j\rightarrow+\infty}f(x_{\mu_{\ell_j}})^{-\abs\beta}\inner{\partial^\beta V}\big(x_{\mu_{\ell_j}}+f(x_{\mu_{\ell_j}})^{-1}x\big).
\end{eqnarray*}
This implies for any $x\in  B_{r/\sqrt 2}$ we have, using the notation \eqref{mrho}
\begin{eqnarray}\label{highests}
	 \abs{\partial_x p(x)}=\lim_{j\rightarrow+\infty} f(x_{\mu_{\ell_j}})^{-1} 
	 \abs{(\partial_x V) \big(x_{\mu_{\ell_j}}+f(x_{\mu_{\ell_j}})^{-1}x\big)},
\end{eqnarray}
\begin{eqnarray}\label{eest}
	 \lambda_{p,i}(x)=\lim_{j\rightarrow+\infty} f(x_{\mu_{\ell_j}})^{-2} \lambda_{V,i}\big(x_{\mu_{\ell_j}}+f(x_{\mu_{\ell_j}})^{-1}x\big)
\end{eqnarray}
and
\begin{eqnarray}\label{eest+12017}
	\mathcal M_p(x)= \lim_{j\rightarrow+\infty}f(x_{\mu_{\ell_j}})^{-2}  \mathcal M_V \big(x_{\mu_{\ell_j}}+f(x_{\mu_{\ell_j}})^{-1}x\big).
\end{eqnarray}
Now let $x\in   B_{r/\sqrt 2} $ and let  $i\in I_{x,p}.$ Then we have $\lambda_{p,i}(x)>0$,  which along  with \eqref{eest}  yields  
\begin{eqnarray*}
	 \lambda_{V,i}\big(x_{\mu_{\ell_j}}+f(x_{\mu_{\ell_j}})^{-1}x\big)>0
\end{eqnarray*}
for all $j$ large enough.  As a result  it follows from \eqref{maiass}  that  
\begin{multline*}
	 \lambda_{V,i}\big(x_{\mu_{\ell_j}}+f(x_{\mu_{\ell_j}})^{-1}x\big)\\
	 \leq C_5\Big(\mathcal M_V\big(x_{\mu_{\ell_j}}+f(x_{\mu_{\ell_j}})^{-1}x\big)+\big|\partial_x V\big(x_{\mu_{\ell_j}}+f(x_{\mu_{\ell_j}})^{-1}x\big)\big|^2\Big)\\
	  +C_5\Big(\sum_{2\leq \abs\alpha \leq k}\abs{\partial^\alpha  V\big(x_{\mu_{\ell_j}}+f(x_{\mu_{\ell_j}})^{-1}x\big)}^{(2-\delta_1)/\abs\alpha}+1\Big),
\end{multline*} 
which holds for all $j$ large enough.  Then using \eqref{highests}-\eqref{eest+12017} yields, for any   $i\in I_{x,p}$ with $x\in   B_{r/\sqrt 2} ,$
\begin{eqnarray*}
	&&\lambda_{p,i}(x)=\lim_{j\rightarrow+\infty} f(x_{\mu_{\ell_j}})^{-2} \lambda_{V, i}\big(x_{\mu_{\ell_j}}+f(x_{\mu_{\ell_j}})^{-1}x\big)\\
	&\leq &  C_5 \lim_{j\rightarrow+\infty} \bigg[   f(x_{\mu_{\ell_j}})^{-2}  \mathcal M_V \big(x_{\mu_{\ell_j}}+f(x_{\mu_{\ell_j}})^{-1}x\big)\\
	&&+ \big|f(x_{\mu_{\ell_j}})^{-1}  \partial_x V\big(x_{\mu_{\ell_j}}+f(x_{\mu_{\ell_j}})^{-1}x\big)\big|^2\\
	&& +  f(x_{\mu_{\ell_j}})^{-\delta_1}\sum_{2\leq \abs\alpha \leq k}\abs{f(x_{\mu_{\ell_j}})^{-\abs\alpha}  \partial^\alpha  V\big(x_{\mu_{\ell_j}}+f(x_{\mu_{\ell_j}})^{-1}x\big)}^{\frac{2-\delta_1}{\abs\alpha}}\\
	&&+ f(x_{\mu_{\ell_j}})^{-2}\bigg]   \\
	&=&C_5\inner{\mathcal M_p(x)+\abs{\partial_x p(x)}
^2},
	\end{eqnarray*}
	the last line holding because $f(x_{\mu_{\ell_j}})^{-1}\rightarrow 0$ as $j\rightarrow+\infty$ and for any $ 2\leq \abs\alpha \leq k$ we have 
\begin{eqnarray*}
	 &&\abs{f(x_{\mu_{\ell_j}})^{-\abs\alpha}  \partial^\alpha  V\big(x_{\mu_{\ell_j}}+f(x_{\mu_{\ell_j}})^{-1}x\big)}\\
	 &\leq&  \inner{f(x_{\mu_{\ell_j}})^{-1}f\big(x_{\mu_{\ell_j}}+f(x_{\mu_{\ell_j}})^{-1}x\big)}^{\abs\alpha}\leq C_6
\end{eqnarray*}
due to \eqref{7310}.
 We have proven \eqref{conforp}.
 
 \bigskip
 \noindent
  \underline{{\it Step 3}}.  Let  $\zeta_j, j\geq 1,$ be given in Step 1.   Observe $\zeta_j\in C_0^\infty(B_{r/\sqrt 2})$  for all $j.$  
 Then in view of  the condition \eqref{uninor}, we   conclude that there exists a subsequence of $\zeta_j,$ still denoted by $\zeta_j,$  and a $\zeta\in H_0^1(B_{r/\sqrt 2})$ such that  
  \begin{eqnarray}\label{weakcon}
  \zeta_j  \rightarrow \zeta ~~~\textrm{weakly in }~H_0^1(B_{r/\sqrt 2}), 
  \end{eqnarray}
   and   
   \begin{eqnarray}\label{strcon}
 \zeta_j  \rightarrow \zeta ~~~\textrm{  strongly in }~L^2(B_{r/\sqrt 2}) 
  \end{eqnarray}
 due to the  compact injection of $H_0^1(B_{r/\sqrt 2})$ into $L^2(B_{r/\sqrt 2}).$   
 The weak convergence \eqref{weakcon} implies
 \begin{eqnarray*}
 \norm{  \zeta}_{L^2}^2+\norm{\partial_x \zeta}_{L^2}^2 &\leq &\inner{\liminf_{j\rightarrow+\infty} \norm{\zeta_j}_{H^1}}^2\leq 	 \liminf_{j\rightarrow+\infty} \norm{\zeta_j}_{H^1}^2\\
 & = &\liminf_{j\rightarrow+\infty} \norm{\partial_x\zeta_j}_{L^2}^2+ \norm{  \zeta}_{L^2}^2,
 \end{eqnarray*}
 the last equality using \eqref{strcon}. Thus
 \begin{eqnarray}\label{parzeta}
 \norm{\partial_x \zeta}_{L^2}^2 \leq \liminf_{j\rightarrow+\infty} \norm{\partial_x\zeta_j}_{L^2}^2.
 \end{eqnarray}
From \eqref{uninor},  \eqref{conv} and  \eqref{strcon} it follows that, observing  supp\,$\zeta_j\subset B_{r/\sqrt 2}$  for all $j\geq 1,$
\begin{eqnarray}\label{zetaj}
 \lim_{j\rightarrow+\infty}\int_{\mathbb R^n}\inner{ \tau^2 \abs{\partial_x q_j}^2- \tau \Delta q_j }\abs{\zeta_j}^2dx = \int_{\mathbb R^n}\inner{ \tau^2 \abs{\partial_x p}^2- \tau \Delta p }\abs{\zeta}^2dx,
\end{eqnarray}
and
\begin{eqnarray}\label{+52}
 \lim_{j\rightarrow+\infty}\norm{  \tilde f_{\tau q_j} \zeta_j}_{L^2}^2= \lim_{j\rightarrow+\infty}\norm{\tilde f_{\tau p}\,\zeta}_{L^2}^2.
\end{eqnarray}
Consequently, observe 
\begin{eqnarray*}
	 \norm{\big(\partial_{x}+ \tau\inner{\partial_{x} p}  \big)\zeta}_{L^2 }^2=\norm{\partial_x\zeta}_{L^2}^2+ \int_{\mathbb R^n}\inner{ \tau^2 \abs{\partial_x p}^2- \tau \Delta p }\abs{\zeta}^2\,dx,
\end{eqnarray*}
and thus   using \eqref{parzeta}-\eqref{+52}  gives
\begin{eqnarray}\label{es+170623}
\begin{split}
 	& \norm{\big(\partial_{x}+ \tau\inner{\partial_{x} p}  \big)\zeta}_{L^2 }^2- \eps  C_2 \norm{\tilde f_{\tau p}  \zeta}_{L^2}^2 \\
 	\leq &  \liminf_{j\rightarrow+\infty}\Big[ \inner{\triangle_{\tau q_j}^{(0)}\zeta_j ,  \zeta_j}_{L^2 } - \eps C_2 \norm{\tilde f_{\tau q_j}(x)  \zeta_j}_{L^2}^2\Big] 
 	 \leq   0,
 	 \end{split}
\end{eqnarray}
the last inequality following from \eqref{cruc1}.  
 Moreover in view of \eqref{conforp} we can apply  Corollary \ref{corcr} to conclude that, decreasing $r$ if necessary so that $r/\sqrt 2\leq \sigma$ with $\sigma$ given in Corollary \ref{corcr},  
 \begin{eqnarray}\label{leri}
		  \norm{\partial_{x}\zeta}^2_{L^2}
    +  \tau^2 \norm{ \inner{\partial_{x}p} \zeta}^2_{L^2}\leq
    C_7 \Big(\norm{\big(\partial_{x}+ \tau\inner{\partial_{x} p}  \big)\zeta}_{L^2 }^2+\norm{\zeta}_{L^2}^2\Big).
	\end{eqnarray}
	Here the constant $C_7$ may depend on the polynomial $p,$ but is independent of $\tau.$  
	On the other hand,  note $p\in E_k$ and then we can use the Baker-Campbell-Hausdorff formula (see \cite[Lemma 4.14]{Nier-BJ} for instance) to obtain  that 
 \begin{eqnarray*}
 	 \tau^{2/k}\norm{\zeta}_{L^2}^2 \leq  C_8 \norm{\tilde f_{\tau p}  \zeta}_{L^2}^2\leq C_9\Big(\norm{\partial_x u}_{L^2}^2+\tau^2\norm{  \inner{\partial_x p} \zeta}_{L^2}^2\Big).
 \end{eqnarray*}
 This along with \eqref{leri} implies that 
\begin{eqnarray}\label{17071201}
 	 \norm{\tilde f_{\tau_0 p}(x)  \zeta}_{L^2}^2\leq C_{10} \norm{\big(\partial_{x}+ \tau_0\inner{\partial_{x} p}  \big)\zeta}_{L^2 }^2
 \end{eqnarray}
 for some $\tau_0$ large enough.  Note 
  \eqref{es+170623} holds for arbitrary  $\tau$ and thus we combine the above estimate and   \eqref{es+170623}  to get 
  \begin{eqnarray*}
   \norm{\big(\partial_{x}+ \tau_0\inner{\partial_{x} p}  \big)\zeta}_{L^2 }^2  \leq  \eps \,C_2 \norm{\tilde f_{\tau_0 p}  \zeta}_{L^2}^2\leq  \eps \,C_2C_{10} \norm{\big(\partial_{x}+ \tau_0\inner{\partial_{x} p}  \big)\zeta}_{L^2 }^2.	
 \end{eqnarray*}
 Thus letting $\eps= 1/\inner{2C_2C_{10}}$ we obtain $ \norm{\big(\partial_{x}+ \tau_0\inner{\partial_{x} p}  \big)\zeta}_{L^2 }=0,$ and thus $\norm{\zeta}_{L^2}=0$ in view of \eqref{17071201}.  Furthermore using \eqref{leri} for $\tau=\tau_0$ gives $\norm{\partial_x\zeta}_{L^2}=0$. This contradicts \eqref{weakcon} and \eqref{strcon}, since $\norm{\zeta_j}_{H_0^1}=1$ by \eqref{uninor}.

 The contradiction yields the first property \eqref{cesc}  in Theorem \ref{thmmain}. 
 \end{proof}
 
 \begin{proof}[Completeness of  the proof of  Theorem \ref{thmmain}] In this part we will prove the second property \eqref{lastestimate} in Theorem \ref{thmmain}.    Recall we have already proven that 
 \begin{equation}\label{fies}
 		\forall~\tau\geq\tau_0,~~\forall~u\in C_0^\infty(\mathbb R^n),\quad  \norm{ \tilde f_\tau u}_{L^2}^2
     \leq C   \inner{\triangle_{\tau V}^{(0)} u, u}_{L^2} + C\norm{u}_{L^2}^2,
 \end{equation}
 for some $\tau_0>0$ and $C\geq 1.$      It remains to consider $\tau$ with $0<\tau<\tau_0.$   
 
 Let $\tau_0$ and $C$ be the constants in \eqref{fies}.     For any $\tau$ with $0<\tau<\tau_0$ we take $m=m_\tau$ by
 \begin{eqnarray*}
 	m=\max\Big\{1, ~\sqrt{(2C-1)/2C} \tau_0/\tau\Big\}.
 \end{eqnarray*}
 Then direct verification shows 
\begin{equation}
	\label{mvalue}
1-	\frac{1}{2C}\leq \big(m\tau / \tau_0\big)^2\leq 1.
\end{equation}
 Note $m\geq 1$ and thus  we have the comparison in the sense of quadratic forms on $C_0^\infty(\mathbb R^n)$: 
 \begin{eqnarray*}
 	 \triangle_{\tau_0 V}^{(0)}   &\leq& m  \triangle_{\tau V}^{(0)}  + (\tau_0^2-m\tau^2)\abs{\partial_x V}^2  - (\tau_0-m\tau) \Delta V\\
 	 &\leq&   m  \triangle_{\tau V}^{(0)} + \Big(1- \big(m\tau / \tau_0\big)^2\Big) \tilde f_{\tau_0} ^2\\
 	 &\leq&   m  \triangle_{\tau V}^{(0)} + \frac{1}{2} \triangle_{\tau_0 V}^{(0)} +\frac{1}{2},
 \end{eqnarray*}
 the last inequality holding because it follows from   \eqref{mvalue} and  \eqref{fies}  that,  for any $u\in C_0^\infty(\mathbb R^n),$
 \begin{eqnarray*}
   \Big(1- \big(m\tau / \tau_0\big)^2\Big) \norm{\tilde f_{\tau_0} u}_{L^2}^2\leq  \frac{1}{2C}\norm{\tilde f_{\tau_0} u}_{L^2}^2\leq  {1\over 2} \inner{\triangle_{\tau_0 V}^{(0)} u, u}_{L^2} +{1\over 2}\norm{u}_{L^2}^2.
 \end{eqnarray*}
 Consequently we have
 \begin{eqnarray*}
 	0\leq  \triangle_{\tau_0 V}^{(0)}   \leq  2 m  \triangle_{\tau V}^{(0)}  +1,
 \end{eqnarray*}
 which yields that, for any $u\in C_0^\infty(\mathbb R^n)$ and for any $0<\tau<\tau_0,$
 \begin{eqnarray*}
 	  \norm{ \tilde f_\tau u}_{L^2}^2\leq   \norm{ \tilde f_{\tau_0} u}_{L^2}^2
    & \leq &C   \inner{\triangle_{\tau_0 V}^{(0)} u, u}_{L^2} + C\norm{u}_{L^2}^2\\
    & \leq&  2mC   \inner{\triangle_{\tau V}^{(0)} u, u}_{L^2}+2C\norm{u}_{L^2}^2.
 \end{eqnarray*}
 This gives \eqref{lastestimate}, completing the proof of 
  Theorem \ref{thmmain}.
\end{proof}


\begin{thebibliography}{1}
\bibitem{ChrFu}
M. Christ,  S.Fu,  {\em Compactness in the $\bar\partial$-Neumann problem, magnetic Schr\"odinger operators, and the Aharonov-Bohm effect.} Adv. Math. 197 (2005), no. 1, 1-40

\bibitem{cfks}
H.L Cycon, R.G Froese, W. Kirsch, and B. Simon,  Schr\"odinger operators with application to quantum mechanics and global geometry. {\it  Text and Monographs in Physics. Springer-Verlag (1987)}

\bibitem{DeVi}
 L. Desvillettes and C. Villani, {\it On the trend to global equilibrium in spatially inhomogeneous entropy-dissipating
systems: the linear Fokker-Planck equation}, Comm. Pure Appl. Math. 54 (2001), no. 1, 1-42.
 
 



\bibitem{FuSt}
 S. Fu,  Emil J. Straube,  {\it Semi-classical analysis of Schr\"odinger operators and compactness in the $\bar\partial$-Neumann problem}. J. Math. Anal. Appl. 271 (2002), no. 1, 267-282.   See also erratum:   J. Math. Anal. Appl. 280 (2003), no. 1, 195-196
 
 \bibitem{FuSt1}
 S. Fu,  Emil J. Straube, 
 Compactness in the $\bar\partial$- Neumann problem. {\em Complex analysis and geometry (Columbus, OH, 1999)}, 141-160, Ohio State Univ. Math. Res. Inst. Publ., 9, de Gruyter, Berlin, 2001
 
 



\bibitem{Ha}
F. Haslinger, The   $\bar\partial$-Neumann problem and Schr\"odinger operators. De Gruyter Expositions in Mathematics, 59. {\em De Gruyter, Berlin,} 2014.

\bibitem{HaHe}
F. Haslinger,  B.Helffer,  
{\em Compactness of the solution operator to $\bar\partial$ in weighted $L^2$-spaces.   }
J. Funct. Anal. 243 (2007), no. 2, 679-697. 

\bibitem{He02}
B. Helffer,   Semiclassical analysis, Witten Laplacians, and statistical mechanics. Series in Partial Differential Equations and Applications, 1. {\em World Scientific Publishing Co., Inc., River Edge, NJ}, 2002

\bibitem{HM}
B. Helffer and A. Mohamed. Sur le spectre essentiel des op\'erateurs de  Schr\"odinger  avec  champ magn\'etique.  Ann. Inst. Fourier 38 (1988),  95-113


\bibitem{HeNi06}

B.~Helffer and  F.~Nier, Quantitative analysis of metastability in reversible diffusion processes via a Witten complex approach: the case with boundary. {\em M\'em. Soc. Math. Fr. (N.S.)} No. 105 (2006),

\bibitem{HelfferNier05}
B.~Helffer and  F.~Nier, ``Hypoelliptic estimates and spectral theory
for
  {F}okker-{P}lanck operators and {W}itten {L}aplacians'', Lecture Notes in
  Mathematics, vol. 1862, Springer-Verlag, Berlin, 2005.
  
  \bibitem{HeNi04}
 B.~Helffer and  F.~Nier,  Quantitative analysis of metastability in reversible diffusion processes via a Witten complex approach. {\em Mat. Contemp.} {\bf 26} (2004), 41-85. 

\bibitem{Hel-Nou}
 B.~Helffer  and  J.~Nourrigat,  ``Hypoellipticit\'e maximale pour des
  op\'erateurs polyn\^omes de champs de vecteurs", Progress in Mathematics,
  vol.~58, Birkh\"auser Boston Inc., 1985.

\bibitem{Herau15}
  F.~H{\'e}rau and   L. Thomann , \emph{On global existence and trend to the equilibrium for the Vlasov-Poisson-Fokker-Planck system with exterior confining potential}.  J. Funct. Anal {\bf 271} (2016), no.5, 1301-1340

\bibitem{Herau07}
  F.~H{\'e}rau, \emph{Short and long time behavior of the Fokker-Planck equation in a confining potential and applications}. J. Funct. Anal. {\bf 244} (2007), no. 1, 95-–118.

\bibitem{Herau06}
 F.~H{\'e}rau, \emph{Hypocoercivity and exponential time decay for the linear inhomogeneous relaxation Boltzmann equation}. Asymptot. Anal. {\bf 46}  (2006), no. 3-4, 349–-359.
 
  

\bibitem{HerauNier04}
 F.~H{\'e}rau and   F.~Nier, \emph{Isotropic hypoellipticity and trend
to
  equilibrium for the {F}okker-{P}lanck equation with a high-degree potential},
  Arch. Ration. Mech. Anal. \textbf{171} (2004), no.~2, 151--218.

 
  
  
   
    
\bibitem{Hormander85}  
   L. H\"ormander, The analysis of linear partial differential operators. I-III, Springer-Verlag, Berlin, 1985.
  
  
  \bibitem{Le}
 D. Le Peutrec,
{\em Small eigenvalues of the Neumann realization of the semiclassical Witten Laplacian.} 
Annales de la facult\'e des sciences de Toulouse S\'er. 6, {\bf 19} (2010), no. 3-4 ,  735-809

 \bibitem{LeNiVe}
 D. Le Peutrec, F. Nier,  C. Viterbo, {\em  Precise Arrhenius law for p-forms: the Witten Laplacian and Morse-Barannikov complex.}  Ann. Henri Poincar\'e {\bf 14} (2013), no. 3, 567-610.
  
\bibitem{MR2599384}
N.~Lerner,
\newblock  Metrics on the phase space and non-selfadjoint
  pseudo-differential operators,  
\newblock Birkh\"auser Verlag, Basel, 2010.

 

\bibitem{lipreprint}
W.-X.  Li.
\newblock {\em  Compactness Criteria for the Resolvent of  the Fokker-Planck operator},   accepted by  Ann. Sc. Norm. Super. Pisa Cl. Sci.  (doi: 10.2422/2036-2145.201511\_008).

\bibitem{lipisa}
W.-X.  Li.
\newblock {\em  Global hypoellipticity and compactness of resolvent for Fokker-Planck operator},   Ann. Sc. Norm. Super. Pisa Cl. Sci. (5)  {\bf 11} (2012), 789--815.

 



\bibitem{Ma}
O. Matte,  J.S. M\o ller,  On the spectrum of semi-classical Witten-Laplacians and Schr\"odinger operators in large dimension.  
J. Funct. Anal. {\bf 220} (2005), no. 2, 243-264. 


  \bibitem{Mi}
L. Michel, About small eigenvalues of Witten Laplacian. Preprint, arXiv:1702.01837
  

\bibitem{Nier-BJ}
F. Nier,   {\em Hypoellipticity for Fokker-Planck operators and Witten Laplacians},  ``Lectures on the analysis of nonlinear partial differential equations'',  Morningside Lect. Math., 1,  Int. Press, Somerville, MA, 2012, Part 1, 31--84.


 
\bibitem{Nourrigat2}

J. Nourrigat, {\em Syst\`{e}mes sous-elliptiques. II.}   Invent. Math. {\bf 104}  (1991), no. 2, 377-400. 

\bibitem{Nourrigat1}
 J. Nourrigat, {\em Subelliptic systems}. Comm. Partial Differential Equations {\bf 15} (1990), no. 3, 341–405
 
 \bibitem{Nourrigat3}
 J. Nourrigat. {\em Subelliptic estimates for systems of pseudo-differential operators}.
Course in Recife (1982). University of Recife
 
 \bibitem{RS}  
 L.P. Rothschild, E.M. Stein, {\em Hypoelliptic differential operators and nilpotent groups}, Acta Math. 137 (1977) 248-315.
   
   \bibitem{St}
 Emil J. Straube,  Lectures on the $\mathscr L^2$-Sobolev theory of the   $\bar\partial$- Neumann problem. ESI Lectures in Mathematics and Physics. {\it  European Mathematical Society (EMS), Z\"urich}, 2010.
  
  \bibitem{tao}
 T. Tao,  Nonlinear dispersive equations. Local and global analysis. CBMS Regional Conference Series in Mathematics, 106. {\it Published for the Conference Board of the Mathematical Sciences, Washington, DC; by the American Mathematical Society, Providence, RI, 2006. }
  
  
  
     
  

\bibitem{Witten}
E. Witten, {\em Supersymmetry and Morse theory.}  J. Differential Geom. 17 (1982), no. 4, 661-692  
\end{thebibliography}
\end{document}